\newtheorem{thm}{Theorem}[section]
\newtheorem{prop}[thm]{Proposition}
\newtheorem{lem}[thm]{Lemma}
\theoremstyle{definition}
\newtheorem{definition}[thm]{Definition}
\newtheorem{example}[thm]{Example}
\theoremstyle{remark}
\newtheorem{remark}[thm]{Remark}
\newtheorem{remarks}[thm]{Remarks}
\numberwithin{equation}{section}
\newcommand{\C}{\mathbb{C}}
\newcommand{\R}{\mathbb{R}}
\DeclareMathOperator{\dist}{dist}
\DeclareMathOperator{\diam}{diam}
\newcommand{\dd}{\mathrm{d}}
\newcommand{\card}{\mathrm{Card}}
\DeclareMathOperator{\supp}{supp}
\newcommand{\N}{\mathbb N}
\newcommand{\Z}{\mathbb Z}
\newcommand{\Q}{\mathbb Q}
\begin{document}

\title{Distribution of points and Hardy type inequalities in spaces of homogeneous type}

\author{E. Routin}
\address{Eddy Routin, Université Paris-Sud, laboratoire de Math\'ematiques, UMR 8628, Orsay F-91405; CNRS, Orsay, F-91405; France}
\email{eddy.routin@normalesup.org}

\begin{abstract}
In the setting of spaces of homogeneous type, we study some Hardy type inequalities, which notably appeared in the proofs of local $T(b)$ theorems as in \cite{AR}. We give some sufficient conditions ensuring their validity, related to the geometry and distribution of points in the homogeneous space. We study the relationships between these conditions and give some examples and counterexamples in the complex plane.
\end{abstract}

\subjclass[2010]{42B20}

\keywords{Space of homogeneous type, geometry and distribution of points, Hardy type inequalities, Layer decay properties, Monotone geodesic property}

\maketitle

\tableofcontents

\bigskip

\section{Introduction}

\bigskip
The goal of this paper is to study, in the setting of a space of homogeneous type, what we call Hardy type inequalities. They notably appear in the proofs of local $T(b)$ theorems as in \cite{Hofmann}, \cite{AR}, where they play a crucial role to estimate some of the matrix coefficients involved in the arguments. The prototype of a Hardy type inequality is the following in the Euclidean space of dimension $1$:
\begin{equation*}\label{hardydim1}
 \left |  \int_I \int _J {\frac{f(y)g(x)}{x-y}\dd y \dd x}  \right | \leq C \left ( \int_I{|f(y)|^{\nu} \dd y} \right )^ {\frac{1}{\nu}} \left ( \int_J{|g(x)|^{\nu'} \dd x} \right )^ {\frac{1}{\nu'}},
 \end{equation*}
where $I,J$ are adjacent intervals, $\supp f \subset I$, $\supp g \subset J$, and $1<\nu<\infty$, $\nu' = \frac{\nu}{\nu-1}$. The integral here is absolutely convergent, and this is an immediate consequence of the boundedness of the Hardy operator $H(f)(x) = \frac{1}{x} \int_0^x {f(t)\dd t}$ (hence our terminology). The above $1-$dimensional inequality easily extends to the Euclidean space in any dimension: for every $1<\nu< +\infty$, there exists $C<+\infty$ such that for every disjoint dyadic cubes $Q,Q'$ in $\R^n$, every function $f \in L^{\nu}(Q)$ supported on $Q$, $g \in L^{\nu'}(Q')$ supported on $Q'$, the following integral is absolutely convergent and we have
\begin{equation*}\label{hardyrn}
\left |  \int_Q \int _{Q'} {\frac{f(y)g(x)}{|x-y|^n}\dd x\dd y}  \right | \leq C \|f\|_{L^{\nu}(Q) } \|g\|_{L^{\nu'}(Q')}.
\end{equation*}
This estimate, well known by the specialists, follows from the $1-$dimensional one by expressing the fact that the singularity in the integral is supported along one direction, transverse to an hyperplane separating the disjoint cubes $Q,Q'$.

A similar result in the setting of a space of homogeneous type is to be expected, that is
\[ \left | \int_I \int_J {\frac{f(y)g(x)}{\mathrm{Vol}(B(x,\dist(x,y)))} d\mu(x) d\mu(y)} \right | \leq C \|f\|_{L^{\nu}(I,d\mu)} \|g\|_{L^{\nu'}(J,d\mu)},  \]
where $I\cap J = \varnothing,$ and $1<\nu<+\infty$, $1/\nu + 1/{\nu'} = 1$. Expectedly, it turns out that it holds without any restriction if $I,J$ are Christ's dyadic cubes (in the sense of \cite{Christ}), even if the previous argument cannot be valid as the dyadic cubes in such a space do not follow any geometry. It seemed not to have been noticed in the literature before our work with P. Auscher \cite{AR}. It relies in particular on the small layers for dyadic cubes. However, if $I$ is a ball $B$ and $J$, say, $2B \backslash B$, then it is not clear in general. It clearly depends on how $B$ and $2B \backslash B$ see each other through their boundary.  In \cite{AR}, we came up with some small boundary hypothesis on the space of homogeneous type (called the relative layer decay property) ensuring that the inequality was satisfied. We also showed that this property held in all doubling complete Riemannian manifolds, geodesic spaces and more generally in any monotone geodesic space of homogeneous type. The latter notion arose in geometric measure theory from the work of R. Tessera \cite{Tess}, and was recently proved by Lin, Nakai and Yang \cite{LNY} to be equivalent to a chain ball notion introduced by S. Buckley \cite{Buck}.

We continue this study in the present paper, investigating further these different conditions and the relationships they entertain. It appears that they are all connected to the way points are distributed in the homogeneous space. We produce some interesting examples and \mbox{counterexamples} in the complex plane (Theorem \ref{diagram}, see Section $2$). A natural question that also arises is the following: if the Hardy type inequality for balls is satisfied for a fixed couple of exponents, is it satisfied for every couple of exponents? We show the answer is positive if the homogeneous space satisfies some additional hypothesis (Proposition \ref{Hardy1couple}).

The paper is organized as follows. We give some basic definitions, recall the results already obtained in \cite{AR} and present our results in Section $2$. We give the proof of Proposition \ref{Hardy1couple} in Section $3$. We then devote Section $4$ to the layer decay and annular decay properties that appeared in \cite{AR}. We study some geometric properties ensuring that the latter are satisfied in Section $5$. Finally, we present in Sections $6$ and $7$ various examples and counterexamples in the complex plane, inspired from a curve conceived by R. Tessera in \cite{Tess}. 

This work is part of a doctorate dissertation that was conducted at Université Paris-Sud under the supervision of P. Auscher. The author would like to warmly thank him for his kind support. I also thank T. Hytönen for his insightful comments and discussions related to this work.

\section{Definitions and main results}

\bigskip

Throughout this paper, we will work in the setting of a space of homogeneous type, that is a triplet $(X,\rho,\mu)$ where $X$ is a set equipped with a metric $\rho$ and a non-negative Borel measure $\mu$ for which there exists a constant $C_D <+\infty$ such that all the associated balls $B(x,r)=\{y \in X ; \rho(x,y) <r \}$ satisfy the doubling property
\[ 0 < \mu (B(x,2r)) \leq C_D \mu (B(x,r)) < \infty  \]
for every $x \in X$ and $r>0$. We suppose that $\mu(X) \in ]0,+\infty]$, and we allow the presence of atoms in $X$, that is points $x \in X$ such that $\mu(\{x\}) \neq 0$.

\medskip

\begin{remark}
One usually only assumes that $\rho$ is a quasi-distance on $X$ in the definition of a space of homogeneous type in the sense of Coifman and Weiss \cite{CoifWeiss}. For the sake of simplicity, we limit ourselves to the metric setting. However, our work can easily be carried out to the quasi-metric setting, though one then has to assume the balls to be Borel sets to give a sense to the objects we will define in the following. Note that this is not necessarily the case as the quasi-distance, in contrast to a distance, may not be Hölder-regular, and quasi-metric balls might not be open nor even Borel sets with respect to the topology defined by the quasi-distance. Other kind of assumptions and arguments appeared in the literature, see for example \cite{AHyt} for a discussion on the subject.
\end{remark}

\medskip

We will use the notation $A\lesssim B$ (resp. $A \eqsim B$) to denote the estimate $A\leq CB$ (resp. $(1/C) B \leq A \leq C B$) for some absolute constant $C$ which may vary from line to line. Denote by $\supp f$ the support of a function $f$ defined on $X$, $\diam E$ the diameter of a subset $E \subset X$, $\bar{E}$ the topological closure of a set $E \subset X$, $\card \, I$ the cardinal of a finite set $I$, $|E|$ the Lebesgue measure of a set $E \subset \R^n$, and $\rho(E,F) = \inf_{x \in E, y \in F} {\rho(x,y)}$ the distance between two subsets $E,F \subset X$.

For $1 \leq p \leq \infty $, let $p' = \frac{p}{p-1}$ be the dual exponent of $p$. The space of $p$-integrable complex valued functions on $X$ with respect to $\mu$ is denoted by $L^p(X)$, the norm of a function $f \in L^p(X)$ by $\|f\|_p$, the duality bracket given by $\langle f,g\rangle = \int_X{fg d\mu}$ (we do mean the bilinear form), and the mean of a function $f$ on a set $E$ denoted by $[f]_E = \mu(E)^{-1} \int_E{f d\mu}.$

Finally, for any $x,y \in X$, we set
\[ \lambda (x,y) = \mu (B(x, \rho (x,y))). \]
It is easy to see that, because of the doubling property, $\lambda(x,y)$ is comparable to $\lambda(y,x)$, uniformly in $x, y \in X.$

\bigskip

The following result, due to M. Christ (see \cite{Christ}), states the existence of sets analogous to the dyadic cubes of $\R^n$ in a space of homogeneous type.
\begin{lem}\label{cubes}
There exist a collection of open subsets $\{ Q_{\alpha}^j \subset X: j \in \mathbb{Z}, \alpha \in I_j \}$, where $I_j$ denotes some (possibly finite) index set depending on $j$, and constants $0 < \delta < 1$, $a_0 > 0$, $\eta > 0$, and $C_1,C_2 < +\infty$ such that
\begin{enumerate}
\item For all $j \in \mathbb{Z}$, $\mu(\{ X \backslash \bigcup_{\alpha \in
I_j}{Q_{\alpha}^j} \} ) = 0$.
\item If $j < j'$ , then either $Q_{\beta}^{j'} \subset
Q_{\alpha}^{j}$, or $Q_{\beta}^{j'} \cap Q_{\alpha}^{j} = \varnothing$.
\item For each $(j',\beta)$ and each $j<j'$ there is a unique $\alpha$ such that $Q_{\beta}^{j'} \subset Q_{\alpha}^{j}$.
\item For each $(j,\alpha)$, we have  $\mathrm{diam}(Q_{\alpha}^j)
\leq C_1 {\delta}^j$.
\item Each $Q_{\alpha}^j$ contains some ball $B(z_{\alpha}^j,a_0
{\delta}^j)$. We say that $z_{\alpha}^j$ is the center of the cube $Q_{\alpha}^j.$
\item Small boundary condition:
\begin{equation} \label{petitefrontiere}
 \mu \left( \left\{ x \in Q_{\alpha}^j: \rho (x,X \backslash Q_{\alpha}^j) \leq t {\delta}^j \right\} \right) \leq C_2 t^{\eta} \mu(Q_{\alpha}^j) \quad \forall j,\alpha, \quad \forall t>0.
 \end{equation}
\end{enumerate}
\end{lem}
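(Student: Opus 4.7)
The plan is to follow M.~Christ's original construction~\cite{Christ}. Fix a parameter $\delta \in (0,1)$ sufficiently small in terms of the doubling constant $C_D$, and at each scale $j \in \Z$ apply Zorn's lemma to select a maximal $\delta^j$-separated family of centers $\{z_\alpha^j\}_{\alpha \in I_j} \subset X$; by maximality the balls $B(z_\alpha^j, \delta^j)$ cover $X$, and by separation the balls $B(z_\alpha^j, \delta^j/2)$ are pairwise disjoint. To enforce nesting, assign to each index $(j+1, \beta)$ a unique parent $(j, \pi(\beta))$ satisfying $\rho(z_{\pi(\beta)}^j, z_\beta^{j+1}) < \delta^j$, which exists by the covering property. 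Iterating, for every $j < j'$ there is a well-defined ancestor map from $I_{j'}$ to $I_j$.

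Next I would define preliminary cubes
\[ \widetilde{Q}_\alpha^j = \bigcup \bigl\{ B(z_\beta^{j'}, \tfrac{1}{3}\delta^{j'}) : j' \geq j, \ (j',\beta) \text{ descended from } (j, \alpha) \bigr\}. \]
Triangle-inequality arguments give $B(z_\alpha^j, a_0 \delta^j) \subset \widetilde{Q}_\alpha^j \subset B(z_\alpha^j, C_1 \delta^j)$ for constants $a_0, C_1$ depending only on $\delta$; the covering property (1), the nesting property (2), and the unique-ancestor property (3) are then immediate from the construction, while (4), (5) follow from the two-sided containment. A routine topological adjustment---thickening $\widetilde{Q}_\alpha^j$ slightly and taking interiors to obtain open sets while preserving the nesting---produces open cubes $Q_\alpha^j$ obeying (1)--(5).

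The main obstacle is property (6), the small boundary condition. The strategy is a self-improvement argument across scales: for fixed $(j, \alpha)$ and $t > 0$, set $E_t = \{x \in Q_\alpha^j : \rho(x, X \setminus Q_\alpha^j) \leq t \delta^j\}$. For $k = j + m$, I would partition the descendants $Q_\beta^k \subset Q_\alpha^j$ into \emph{interior} subcubes (those at distance $\geq C_1 \delta^k$ from $X \setminus Q_\alpha^j$, and hence disjoint from $E_t$ for $t$ small compared to $\delta^m$) and \emph{boundary-adjacent} subcubes. Using the doubling property, one shows that the total measure of the boundary-adjacent subcubes at scale $k+1$ is at most $(1 - \varepsilon)$ times that at scale $k$, for some $\varepsilon = \varepsilon(C_D, \delta) > 0$. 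Iterating $m$ times yields $\mu(E_{c \delta^m}) \leq (1 - \varepsilon)^m \mu(Q_\alpha^j)$, which gives the power-type estimate~\eqref{petitefrontiere} with $\eta = \log(1-\varepsilon)^{-1}/\log \delta^{-1}$. The delicate point is closing this recursion uniformly in $(j, \alpha)$, which constrains both the topological definition of $Q_\alpha^j$ as an open neighborhood of $\widetilde{Q}_\alpha^j$ and the selection of the parent map $\pi$; this is where the bulk of the technical work lies.
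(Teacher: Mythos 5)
First, a point of comparison: the paper does not prove this lemma at all. It is stated as a result due to M.~Christ and the reader is referred to \cite{Christ}; the lemma is used as a black box throughout. So there is no in-paper argument to measure yours against, and what you have written is an attempted reconstruction of Christ's construction. The skeleton is right (maximal $\delta^j$-separated nets, a parent map, cubes built as unions of balls over descendants, an iterative decay argument for the boundary layers), but two steps that you declare easy or defer are precisely where the real work lies.

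The first gap is the claim that properties (2) and (3) are ``immediate.'' Distinct cubes of the same generation have disjoint sets of descendant \emph{indices}, but the balls $B(z_\beta^{j'},\tfrac{1}{3}\delta^{j'})$ attached to descendants of different generation-$j$ cubes live at all scales $j'\geq j$ and can a priori intersect: iterating the parent relation, the center of a deep descendant can drift by as much as $\sum_{m\geq j}\delta^m$ from its generation-$j$ ancestor, which is comparable to the $\delta^j$-separation of the net at that generation. Christ devotes a specific lemma to this point, controlling the drift and choosing the radius constant and $\delta$ compatibly so that the descendant families of distinct cubes remain at positive relative distance; without it the nesting (2)--(3) does not follow from your definition. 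Relatedly, the ``routine topological adjustment---thickening and taking interiors'' is both unnecessary (a union of open balls is already open) and harmful, since any thickening threatens the disjointness you have not yet established. The second gap is property (6): you name the correct mechanism (geometric decay of the boundary layer across generations) but do not execute the step that drives it, namely that every boundary-adjacent subcube of generation $k$ contains a generation-$(k{+}1)$ subcube of comparable measure which is interior, uniformly in $(j,\alpha)$; this is where doubling, the choice of $\delta$, and property (5) interact, and you concede that ``this is where the bulk of the technical work lies.'' As written the proposal is an outline of Christ's proof rather than a proof; since the paper itself simply cites \cite{Christ}, the honest options are to do the same or to supply these two missing lemmas in full.
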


\bigskip

\noindent We will call those open sets dyadic cubes of the space of homogeneous type $X$. For a cube $Q=Q^j_{\alpha}$, $j$ is called the generation of $Q$, and we set $l(Q)= \delta^j$. By $(4)$ and $(5)$, $l(Q)$ is comparable to the diameter of $Q$, and we call it, in analogy with $\R^n$, the length of $Q$.  Whenever $Q^{j+1}_{\alpha} \subset Q^j_{\beta}$, we will say that $ Q^{j+1}_{\alpha}$ is a child of $Q^{j}_{\beta}$, and $Q^{j}_{\beta}$ the parent of $Q^{j+1}_{\alpha}$. It is easy to check that each dyadic cube has a number of children uniformly bounded. A neighbor of $Q$ is any dyadic cube $Q'$ of the same generation with $\rho(Q,Q') < l(Q)$. The notation $\widehat{Q}$ will denote the union of $Q$ and all its neighbors. It is clear that $Q$ and $\widehat{Q}$ have comparable measures. It is also easy to check that a cube $Q$ has a number of neighbors that is uniformly bounded.
%
%%
%%We remark that all atoms must be isolated points by the doubling condition. In case $x$ is an atom, there exists a smallest integer $j$ such that $\{ x \}$ is a dyadic cube $Q_{\alpha}^j$ of generation $j$. In that case, $Q^j_{\alpha}$ has only one child $Q_{\beta}^{j+1} = Q^j_{\alpha}$ as set, $Q_{\beta}^{j+1}$ has only one child, and so on.
%
%\bigskip
%
%Our first result is a Hardy type inequality in the dyadic setting, analogous to the Euclidean one, valid in general spaces of homogeneous type.
%
%\begin{prop}\label{Hardydyadic}
%Let $(X,\rho,\mu)$ be a space of homogeneous type. Let $Q, Q'$ be two disjoint dyadic cubes in $X$. Let $1< \nu < +\infty$, with dual exponent $\nu'$. There exists $C <+\infty$ such that for every function $f$ supported on $Q$, $f\in L^{\nu}(Q)$, and every function $g$ supported on $Q'$, $g \in L^{\nu'}(Q')$, we have
%\begin{equation}  \label{2etoile}
%\int_{Q'} \int _Q {\frac{|f(y)g(x)|}{\lambda(x,y)}\dd \mu(y)\dd \mu(x)}  \leq C \|f\|_{\nu}  \|g\|_{\nu'}.
%\end{equation}
%The constant $C$ only depends on $C_D$ and $\nu$.
%\end{prop}
%
%\noindent This is Lemma $2.4$ of \cite{AR}, where it has already been proved. Let us remark that this result is crucial to the estimation of some of the matrix coefficients appearing in the argument to prove the local $T(b)$ theorem central to that paper.

\bigskip

Operating in this dyadic setting is often very effective, but as the construction of these dyadic cubes is quite abstract, they do not follow any geometry and can be "ugly" sets in practice, in spite of their nice properties. Thus, with the motivation to state a local $T(b)$ theorem with hypotheses on balls rather than dyadic cubes in \cite{AR}, we rather looked for a Hardy type inequality valid in the setting of balls. Let us precise what we mean with the following definition.

\bigskip

\begin{definition}{Hardy property.}\\ \label{HardyBalls}
Let $(X,\rho,\mu)$ be a space of homogeneous type. We say that $X$ has the Hardy property $(\mathrm{HP})$ if for every $1<\nu< +\infty$, with dual exponent $\nu'$, there exists $C < +\infty$ such that for every ball $B$ in $X$, with $2B$ denoting the concentric ball with double radius, and all functions $f$ supported on $B$, $f\in L^{\nu}(B)$, $g$ supported on $2B \backslash B$, $g \in L^{\nu'}(2B \backslash B)$, we have
\begin{equation}  \label{3etoile} \tag{H}
\int_{B} \int _{2B\backslash B} {\frac{|f(y)g(x)|}{\lambda(x,y)}\dd \mu(x)\dd \mu(y)}  \leq C \|f\|_{\nu}  \|g\|_{\nu'}.
\end{equation}
\end{definition}

\bigskip

\noindent This is Definition $3.4$ of \cite{AR}. One could have equivalently replaced $2B$ by $cB$ for fixed $c>1$ in \eqref{3etoile}. As stated in the introduction, \eqref{3etoile} is always valid if one replaces $B$ by a dyadic cube $Q$ and $2B \backslash B$ by $\widehat{Q}$. This is Lemma $2.4$ of \cite{AR}. Let us remark that this result was crucial to the estimation of some of the matrix coefficients appearing in the argument to prove the local $T(b)$ theorem central to that paper

Things are actually a bit trickier in the setting of balls, and the Hardy property $(\mathrm{HP})$ is not always valid, as we will show in Section $7$. The difficulty owes to the fact that balls obviously do not satisfy in general the nice properties satisfied by the dyadic cubes, and particularly the fact that they have small boundaries. We looked for conditions on the way points are distributed in the space of homogeneous type ensuring that the Hardy property would be satisfied. Our main result is the following:

\begin{thm}\label{diagram}
Let $(X, \rho,\mu)$ be a space of homogeneous type. We have the following diagram of implications in $X$:
\[ \xymatrix{
 & & (\mathrm{HB}) \ar@{=>}@<1ex>[d]_{\,}  \ar@{=>}@<1,5ex>[rd]^{\,}   \\
(\mathrm{M}) \, \, \,  \, \not \!\!\!\!\!\!\!  \ar@{=>}@<1,5ex>[r]  & \eqref{LUAD} \ar@{=>}[l] \, \, \, \, \not \!\!\!\!\!\!\!  \ar@{=>}@<1,5ex>[r]  \ar@{=>}[d]  & \eqref{LUOLD}  \ar@{=>}@<1ex>[d]_{\,}   \ar@{=>}[l]  \ar@{=>}[r]  \ar@{=>}@<1ex>[u]|@{/} \ar@{=>}@<-0,5ex>_{\,}[dl]|@{/}  & (\mathrm{HP}) \ar@{=>}@<-3ex>_{\,}[lu]|@{/}  \\
 &  \eqref{AD}   \, \, \,\,\, \,\, \not \!\!\!\!\!\!\!\!\!  \ar@{=>}@<1,25ex>[r] & \eqref{OLD} \ar@{=>}[l]  \ar@{=>}@<1ex>[u]|@{/}  \ar@{=>}@<-1,5ex>_{\,}[ru]|@{/}&
}\]

\end{thm}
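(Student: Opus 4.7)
The statement compiles a collection of implications and non-implications among seven properties, so the proof will be assembled from many small arguments. I would organize them into three blocks: the easy positive arrows, the substantive Hardy implication, and the counterexamples witnessing each crossed arrow.

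The easy block consists of the arrows between a ``locally uniform'' version and its plain counterpart (such as LUAD $\Rightarrow$ AD, LUOLD $\Rightarrow$ OLD, LUAD $\Rightarrow$ LUOLD, OLD $\Rightarrow$ AD): all of these should follow by unpacking the definitions and invoking the doubling property to compare measures of annuli and outer layers at related scales. Similarly, HB $\Rightarrow$ HP is a specialization of the stronger Hardy estimate to the configuration in Definition \ref{HardyBalls}. The arrow linking M and LUAD follows from the construction already used in \cite{AR}: cover a thin annulus by a chain of balls centered on a monotone geodesic, and use doubling along the chain to sum their measures against the volume of the target ball.

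The substantive positive arrow is LUOLD $\Rightarrow$ HP. I would fix a ball $B$ of radius $r$ with $\supp f \subset B$, $\supp g \subset 2B \setminus B$, and partition both sides into dyadic layers $L_k = \{ x : 2^{-k-1} r \leq \rho(x, \partial B) < 2^{-k} r \}$. The singularity of $1/\lambda(x,y)$ only occurs when $x$ and $y$ are both close to $\partial B$, so on a pair of layers of comparable scale $2^{-k} r$ the kernel is controlled by $1/\mu(B(x, 2^{-k} r))$, while LUOLD controls the measure of each $L_k$ by $\mu(B) \cdot 2^{-k\eta}$ for some $\eta > 0$. A layer-by-layer H\"older estimate then sums to a geometric series in $k$, yielding the required inequality. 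The arrow HB $\Rightarrow$ LUOLD is obtained dually, by inserting indicator test functions concentrated in thin layers near $\partial B$ into the Hardy inequality and reading off the layer volume estimate.

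The hardest block, and the main obstacle, is the counterexamples. Each crossed arrow demands a space of homogeneous type in the plane that satisfies the weaker property while violating the adjacent stronger one. Following the paper's outline for Sections $6$ and $7$, these will be tuned variants of Tessera's curve from \cite{Tess}, equipped with arc-length measure, with local geometry engineered so that, for instance, the outer layers near certain balls are atypically thick (killing LUOLD) while AD or OLD persists at every scale. For each non-implication one must certify a uniform-over-all-balls estimate for the property that does hold, and simultaneously exhibit an explicit bad sequence of balls witnessing the failure of the stronger property; it is this delicate balance, and not the positive Hardy estimate, that concentrates the bulk of the technical work.
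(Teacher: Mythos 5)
There are genuine gaps. The most serious concerns $(\mathrm{HB})$: you treat it as if it were a ``stronger Hardy estimate'' that one specializes to get $(\mathrm{HP})$, and you propose to get $(\mathrm{HB}) \Rightarrow \eqref{LUOLD}$ ``dually, by inserting indicator test functions \ldots into the Hardy inequality.'' But $(\mathrm{HB})$ is not an inequality of Hardy type at all: it says that every ball $B$ and its complement $B^c$ are themselves spaces of homogeneous type with uniformly bounded doubling constants. There is nothing to insert test functions into, and in fact the paper explicitly does \emph{not} know whether $(\mathrm{HP})$ implies \eqref{LUOLD}, so no duality of that kind is available. The actual argument runs in the opposite logical direction: since $B$ and $B^c$ are doubling, Lemma \ref{cubes} provides Christ dyadic decompositions of each with uniform constants, and the small boundary property \eqref{petitefrontiere} of those cubes, applied at a generation chosen between the scales of $\varepsilon$ and $R$, transfers to the layers of $B$ and yields \eqref{LUOLD}; then $(\mathrm{HP})$ follows from Proposition \ref{LUOLDP}. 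Your sketch of $\eqref{LUOLD} \Rightarrow (\mathrm{HP})$ also has a fatal flaw: the only quantitative input you invoke is the global bound $\mu(L_k) \lesssim 2^{-k\eta}\mu(B)$, which is exactly \eqref{OLD}, not \eqref{LUOLD}; and the theorem itself asserts $\eqref{OLD} \not\Rightarrow (\mathrm{HP})$ (witnessed by the polynomially oscillating curve $X_2$), so an argument using only that bound cannot close. Moreover, two points $x,y$ lying in layers of width $2^{-k}r$ can still be at mutual distance comparable to $r$, so the kernel is not controlled by $1/\mu(B(x,2^{-k}r))$ on such pairs. The paper's proof instead fixes $x$, decomposes the $y$-integral over coronae $2^{j}\rho(x,B^c) \leq \rho(x,y) < 2^{j+1}\rho(x,B^c)$, and applies \eqref{LUOLD} to the \emph{localized} sets $E_x \cap B(x,2^{j+1}\rho(x,B^c))$ relative to $\mu(B(x,2^{j+1}\rho(x,B^c)))$, summing a geometric series and dominating everything by centered maximal functions; the localization in \eqref{LUOLD} is precisely what makes this work. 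Finally, you list ``$\eqref{OLD} \Rightarrow \eqref{AD}$'' among the easy positive arrows, whereas the diagram asserts the opposite (only $\eqref{AD} \Rightarrow \eqref{OLD}$ holds, via the containment $B_\varepsilon \subset C_{r+2\varepsilon,r-\varepsilon}(x)$; the converse fails on the Tessera curve $X_T$).

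On the counterexamples, you correctly anticipate that they carry much of the weight and that they are perturbations of Tessera's curve, but the proposal supplies none of them, and the required menagerie is more varied than ``thick layers killing \eqref{LUOLD} while \eqref{OLD} persists.'' One needs: the spiral $X_T$ itself, which satisfies \eqref{LUOLD} and $(\mathrm{HP})$ but fails \eqref{AD}, \eqref{LUAD}, $(\mathrm{HB})$ and \eqref{monotone} (the failure of \eqref{AD} comes from entire half-circles sitting on spheres centered at the origin, not from thick layers); the exponentially oscillating curve $X_1$, failing both \eqref{OLD} and $(\mathrm{HP})$; the polynomially oscillating curve $X_2$, satisfying \eqref{OLD} but neither \eqref{LUOLD} nor $(\mathrm{HP})$ (the disproof of $(\mathrm{HP})$ there reduces to the unboundedness of the Hilbert-type matrix $(|p-k|^{-1})$ on $\ell^2$); and elementary examples such as the line minus an interval or a triangle to rule out $\eqref{LUAD} \Rightarrow \eqref{monotone}$ and $(\mathrm{HP}) \Rightarrow (\mathrm{HB})$. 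Each of these requires a uniform verification of the surviving property over \emph{all} balls (for $X_2$ this involves a case analysis on balls tangential and non-tangential to the unit circle), which is where the real technical work lies.
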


\medskip

\noindent A few comments are in order. \begin{enumerate}

\smallskip

\item Theorem \ref{diagram} sums up our study of sufficient conditions for the Hardy property $(\mathrm{HP})$, and of the relationships between these conditions. This work was initiated in \cite{AR}, where some of the implications in the above diagram have already been proved. Every property except $(\mathrm{HP})$ is related to the geometry and the distribution of points in the homogeneous space. 

\smallskip

\item The layer decay \eqref{OLD} and relative layer decay \eqref{LUOLD} properties on one hand, and the stronger annular decay \eqref{AD} and relative annular decay \eqref{LUAD} properties on the other hand, are geometric conditions both metric and measure related. These properties have all been introduced in \cite{AR}, but let us point out that here we slightly modify \eqref{LUOLD} and \eqref{LUAD}, which does not affect the statements already proved in \cite{AR}. They will be recalled in Section $4$.

\smallskip

\item The monotone geodesic property \eqref{monotone} is purely metric and it states the existence of chains of points between two set points. It will be precisely defined and studied in Section $5$, along with what we call the homogeneous balls property $(\mathrm{HB})$.

\smallskip

\item Sections $6$ and $7$ will be devoted to the presentation of various examples and counter-examples which will provide the false implications in Theorem \ref{diagram}, as well as cases of spaces not satisfying $(\mathrm{HP})$.

\smallskip

\item Observe that, conversely, we do not know necessary conditions for the Hardy property $(\mathrm{HP})$. In particular, does $(\mathrm{HP})$ imply that for every ball $B$ of the homogeneous space, $\mu (\overline{B}\backslash B) =0$ ? We think that the answer should be positive, but we have been unable to prove it. Similarly, does $(\mathrm{HP})$ imply \eqref{LUOLD} ? We think this has to be false, but we have not come up with a counterexample yet.
\end{enumerate}
\bigskip

Our last result deals with a natural question regarding these Hardy type inequalities, inspired from the Calder\'on-Zygmund theory. The question is the following: in a general space of homogeneous type, is it possible to deduce the Hardy property $(\mathrm{HP})$ from the Hardy type inequality \eqref{3etoile} for a particular couple $(p,p')$? We have proved that the answer is positive, provided some rather mild additional hypothesis is assumed on the homogeneous space, as shown by the following result.

\medskip

\begin{prop}\label{Hardy1couple}
Let $(X, \rho,\mu)$ be a space of homogeneous type. Assume that for every ball $B \subset X$, $\mu(\bar{B} \backslash B) =0$. Then, if $X$ satisfies the Hardy type inequality \eqref{3etoile} for one particular couple of exponents $(p,p')$, $X$ has the Hardy property $(\mathrm{HP})$.
\end{prop}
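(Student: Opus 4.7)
\emph{Proof plan.} My approach would be to reduce the Hardy inequality \eqref{3etoile} at an arbitrary exponent $(\nu,\nu')$ to the dyadic analogue proved in Lemma~2.4 of \cite{AR} --- which holds at every exponent --- by approximating $B$ and $A:=2B\setminus B$ from the inside by finite unions of Christ cubes. The hypothesis $\mu(\bar B\setminus B)=0$, together with the one-exponent inequality \eqref{3etoile} at $(p,p')$, would intervene only to absorb the transient boundary remainder in this refinement.

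Fix $1<\nu<\infty$ and nonnegative bounded $f\in L^{\nu}(B)$, $g\in L^{\nu'}(A)$ (the general case follows by monotone convergence). For each integer $j$, let $B^{j}_{\mathrm{in}}$ be the union of the generation-$j$ Christ cubes entirely contained in $B$, set $B^{j}_{\mathrm{bdy}}=B\setminus B^{j}_{\mathrm{in}}$, and define $A^{j}_{\mathrm{in}}, A^{j}_{\mathrm{bdy}}$ analogously. By property~(4) of Lemma~\ref{cubes}, both $B^{j}_{\mathrm{bdy}}$ and $A^{j}_{\mathrm{bdy}}$ sit in the $C\delta^{j}$-neighborhood of the corresponding topological boundaries. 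Since $B$ is open, $\bigcap_{j}B^{j}_{\mathrm{bdy}}=\varnothing$; and since $A\setminus\operatorname{int}(A)=\bar B\setminus B$ is null by hypothesis, monotone convergence yields $\mu(B^{j}_{\mathrm{bdy}}), \mu(A^{j}_{\mathrm{bdy}})\to 0$ as $j\to\infty$. Decompose $I=I_{\mathrm{in}}(j)+I_{\mathrm{bdy}}(j)$ where
\[
I_{\mathrm{in}}(j)=\iint_{B^{j}_{\mathrm{in}}\times A^{j}_{\mathrm{in}}}\frac{f(y)g(x)}{\lambda(x,y)}\dd\mu(y)\dd\mu(x)
\]
and $I_{\mathrm{bdy}}(j)$ collects the three cross-terms involving a boundary piece.

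The boundary term is handled at once by the \emph{assumed} inequality \eqref{3etoile} at $(p,p')$ on $B$: applying it to each cross-term gives
\[
I_{\mathrm{bdy}}(j)\;\lesssim\;\|f\mathbf 1_{B^{j}_{\mathrm{bdy}}}\|_{p}\|g\|_{p'}+\|f\|_{p}\|g\mathbf 1_{A^{j}_{\mathrm{bdy}}}\|_{p'}+\|f\mathbf 1_{B^{j}_{\mathrm{bdy}}}\|_{p}\|g\mathbf 1_{A^{j}_{\mathrm{bdy}}}\|_{p'},
\]
and since $f,g$ are bounded while the boundary layers shrink to a null set, dominated convergence yields $I_{\mathrm{bdy}}(j)\to 0$. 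For $I_{\mathrm{in}}(j)$, I would expand it as a double sum over pairs $(Q,Q')$ of generation-$j$ cubes with $Q\subset B$ and $Q'\subset A$, and attach each pair to the largest generation $k\le j$ at which its gen-$k$ ancestors $\widetilde Q$ and $\widetilde Q'$ satisfy $\widetilde Q'\subset\widehat{\widetilde Q}$. The dyadic Hardy inequality of Lemma~2.4 of \cite{AR} at exponent $\nu$, applied to $\widetilde Q$, majorizes the contribution of all pairs sharing this ancestor by $C\|f\mathbf 1_{\widetilde Q}\|_{\nu}\|g\mathbf 1_{\widehat{\widetilde Q}\setminus\widetilde Q}\|_{\nu'}$; summation over ancestors via Hölder's inequality and the bounded-overlap property of $\{\widehat R\}$ then produces the uniform estimate $I_{\mathrm{in}}(j)\le C_{\nu}\|f\|_{\nu}\|g\|_{\nu'}$. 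Letting $j\to\infty$ in $I=I_{\mathrm{in}}(j)+I_{\mathrm{bdy}}(j)$ and removing the boundedness of $f,g$ by monotone convergence establishes \eqref{3etoile} at $(\nu,\nu')$, hence $(\mathrm{HP})$. The main obstacle is precisely this uniform dyadic bound on $I_{\mathrm{in}}(j)$: the delicate point is to organize the multi-scale sum so that each fine-scale pair $(Q,Q')$ is attached to a \emph{unique} ancestor scale and the resulting summation does not over-count, the bounded overlap of the dyadic neighborhoods $\widehat R$ carrying the essential weight.
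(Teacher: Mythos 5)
Your strategy is genuinely different from the paper's (which regularizes the kernel $\lambda^{-1}$, runs Calder\'on--Zygmund decompositions over Whitney coverings of $B$ and $\overline{B}^{\,c}$ to prove that $T$ and $T^{\ast}$ are of weak type $(1,1)$, and then interpolates and dualizes from the assumed $(p,p')$ bound). Unfortunately, your version has a fatal gap, and it is located exactly where you flag "the main obstacle": the claimed uniform bound $I_{\mathrm{in}}(j)\le C_{\nu}\|f\|_{\nu}\|g\|_{\nu'}$ is false in general. Before looking at why, note a structural problem: in your argument the hypothesis \eqref{3etoile} at $(p,p')$ is used only to show that the remainder $I_{\mathrm{bdy}}(j)$ tends to $0$, while the main term is controlled unconditionally. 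Since $I_{\mathrm{in}}(j)\nearrow I$ by monotone convergence (the inner approximations increase to $B$ and to $2B\setminus B$ up to null sets), a uniform bound on $I_{\mathrm{in}}(j)$ would by itself give \eqref{3etoile} at every exponent in \emph{any} space of homogeneous type with $\mu(\bar B\setminus B)=0$ for all balls. The space $X_2$ of Section 7.2 satisfies \eqref{OLD} (hence $\mu(\bar B\setminus B)=0$ for every ball) yet fails $(\mathrm{HP})$, so no such unconditional bound can exist.

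Concretely, the breakdown is in the summation over ancestor scales. For a fixed scale $k$, the pairs attached to a given ancestor pair $(\widetilde Q,\widetilde Q')$ contribute at most $C\|f\mathbf 1_{\widetilde Q}\|_{\nu}\|g\mathbf 1_{\widetilde Q'}\|_{\nu'}$ (in fact a crude size estimate suffices here, since the attachment forces $\rho(Q,Q')\gtrsim\delta^{k}$; note also that when the two ancestors coincide, i.e.\ both fine cubes sit inside one coarse cube straddling $\partial B$, Lemma 2.4 of \cite{AR} does not apply at all, because the supports are separated by $\partial B$ rather than by a dyadic boundary). H\"older and the bounded overlap of $\{\widehat R\}$ \emph{within generation $k$} then give $\sum_{(\widetilde Q,\widetilde Q')}\|f\mathbf 1_{\widetilde Q}\|_{\nu}\|g\mathbf 1_{\widetilde Q'}\|_{\nu'}\lesssim\|f\|_{\nu}\|g\|_{\nu'}$ for each fixed $k$ --- but there is no overlap bound \emph{across} generations: a point near $\partial B$ lies in a contributing ancestor at every scale, and summing the per-scale bound over the $\sim j-k_0$ relevant generations produces $(j-k_0)\|f\|_{\nu}\|g\|_{\nu'}\to\infty$. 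To recover geometric decay in $k$ one must exploit that the contributing ancestors at scale $k$ live in a $\delta^{k}$-layer around $\partial B$ and that the mass of $f$ and $g$ in that layer decays --- which is precisely the relative layer decay property \eqref{LUOLD} (compare the factor $2^{-j\eta/\sigma'}$ in the proof of Proposition \ref{LUOLDP}), and it is not available here. To repair the proof one must make the hypothesis \eqref{3etoile} at $(p,p')$ bear on the main term, which is what the paper's weak-$(1,1)$/interpolation/duality argument accomplishes.
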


\noindent We will give the proof of this result in Section $3$. Remark that the assumption made on the homogeneous space is in particular true if $X$ satisfies the layer decay property \eqref{OLD}, see Section $4$.

%Assume that $X$ satisfies the annular decay property \eqref{AD}.

\bigskip

\section{Proof of Proposition \ref{Hardy1couple}}

\bigskip

\begin{proof}
Fix a ball $B=B(z_B,r) \subset X$. We assume that \eqref{3etoile} holds for the couple $(p,p')$. By Fubini's theorem, this implies that one can define $T : L^p(B) \rightarrow L^p(2B \backslash B)$ by the absolutely convergent integral
\[  (Tf) (y) = \int_B \frac{f(x)}{\lambda(x,y)} \dd \mu(x)  \quad \mathrm{for} \, \mathrm{almost} \, \mathrm{every} \ \  y \in 2B \backslash B. \]
We will proceed in three steps.
\bigskip

\noindent $(1)$ The first step in the argument consists in regularizing the kernel $\lambda(x,y)^{-1}$: we show that we can freely assume it satisfies a Lipschitz regularity estimate\footnote{We thank T. Hytönen for this idea which nicely improved our earlier result.}. To do this, let $\varphi$ be a function of the real variable such that $\varphi \in C^1([0,+\infty[)$, $\varphi \geq 0$, $\supp \varphi \subset [1,4]$ and $\int_0^{+\infty} \varphi(t) \frac{\dd t}{t}=1$. For $r\geq 0$, set $\lambda(y,r) = \mu(B(y,r))$. Set
\[ \widetilde{\lambda}(x,y) = \int_0^{+\infty} \lambda(y,r) \, \varphi \left (\frac{\rho(x,y)}{r} \right ) \frac{\dd r}{r}. \]
First, observe that $\widetilde{\lambda}(x,y) $ is comparable to $\lambda(x,y)$, uniformly in $x,y \in X$. Indeed, by an easy change of variable, write
\[   \widetilde{\lambda}(x,y) = \int_0^{+\infty} \lambda \left ( y, \frac{\rho(x,y)}{u} \right )  \varphi(u) \frac{\dd u}{u}.    \]
Because of the support conditions and size estimate of $\varphi$, it comes
\[  \lambda \left (y, \frac{1}{4} \rho(x,y) \right ) \leq \widetilde{\lambda}(x,y) \leq \lambda(y, \rho(x,y)) .  \]
By the doubling property, it follows that, uniformly in $x,y \in X$, we have
\begin{equation}\label{comparable}
\widetilde{\lambda}(x,y)  \eqsim \lambda(x,y).
\end{equation}
Then we say that $ \widetilde{\lambda}^{-1}$ satisfies a Lipschitz regularity estimate in the first variable. Indeed, fix $x,x',y \in X$ such that $\rho(x,x') \leq \rho(x,y)/2$ and $\rho(x,y)>0$. Because of the support conditions and regularity of $\varphi$, we have
\begin{align*}
| \widetilde{\lambda} (x,y) - \widetilde{\lambda} (x',y) | & \leq \int_0^{+\infty} \lambda(y,r) \left | \varphi  \left (\frac{\rho(x,y)}{r} \right ) - \varphi \left (\frac{\rho(x',y)}{r} \right ) \right |   \frac{\dd r}{r} \\
& \lesssim \int_{ r \in [\frac{1}{4}\rho(x,y), \rho(x,y)] \cup  [\frac{1}{4}\rho(x',y), \rho(x',y)] } \lambda(y,r) \frac{|\rho(x,y) - \rho(x',y)|}{r} \frac{\dd r}{r} \\
& \lesssim \rho(x,x') \lambda(x,y) \int_{ \frac{1}{4}\rho(x,y) \leq r \leq \rho(x,y) } \frac{\dd r}{r^2} +  \rho(x,x') \lambda(x',y) \int_{ \frac{1}{4}\rho(x',y) \leq r \leq \rho(x',y) } \frac{\dd r}{r^2}\\
& \lesssim \rho(x,x') \left ( \frac{\lambda(x,y)}{\rho(x,y)}  +  \frac{\lambda(x',y)}{\rho(x',y)} \right ). \\
\end{align*}
But since $\rho(x,x') \leq \rho(x,y)/2$, we have $\rho (x,y) \eqsim \rho(x',y)$ and $\lambda(x,y) \eqsim \lambda(x',y)$, uniformly in $x,x',y$. It follows that
\begin{equation}\label{lipschitzreg}
| \widetilde{\lambda} (x,y)^{-1} - \widetilde{\lambda} (x',y)^{-1} |  = \frac {| \widetilde{\lambda} (x,y) - \widetilde{\lambda} (x',y) |}{\widetilde{\lambda}(x,y) \widetilde{\lambda}(x',y)}  \lesssim \frac{\rho(x,x')}{\rho(x,y)} \frac{1}{\widetilde{\lambda}(x,y)},
\end{equation}
because $\widetilde{\lambda}(x,y) \eqsim \widetilde{\lambda}(x',y) \eqsim \lambda(x,y) $.\\
But because of \eqref{comparable}, one can define an operator $\widetilde{T} : L^p(B) \rightarrow L^p(2B \backslash B)$ by the absolutely convergent integral
\[  (\widetilde{T}f) (y) = \int_B \frac{f(x)}{\widetilde{\lambda}(x,y)} \dd \mu(x)  \quad \mathrm{for} \, \mathrm{almost} \, \mathrm{every} \ \  y \in 2B \backslash B, \]
and, for every $1<\nu<+\infty$, the boundedness of $T : L^{\nu}(B) \rightarrow L^{\nu}(2B \backslash B)$ is equivalent to the boundedness of $\widetilde{T} : L^{\nu}(B) \rightarrow L^{\nu}(2B \backslash B)$. Obviously, by symmetry, we can apply the same argument with respect to the second variable. It shows that we can freely assume the kernel $\lambda(x,y)^{-1}$ to satisfy the Lipschitz regularity estimate \eqref{lipschitzreg} in both variables, which we will do in the following, forgetting this operator $\widetilde{T}$. Observe however that, under this assumption, we can no longer use the fact that $\lambda(x,y) = \mu(B(x,\rho(x,y)))$, only that these two quantities are comparable.

\bigskip

\noindent $(2)$ The second step in the argument now consists in applying a standard Calder\'on-Zygmund decomposition. We show that $T$ is of weak type $(1,1)$ : we prove that for all $f \in L^1(B)$, with $\supp f \subset B$
\[ \mu( \{ x \in 2B \backslash B \mid |Tf(x)| > \alpha \} ) \lesssim \frac{1}{\alpha} \|f\|_{L^1(B)}. \]
The idea is to write a Calder\'on-Zygmund decomposition of $f$ on $X$. However, we have to be a bit careful, because if we write the standard decomposition $f = g + \sum_{i \in I} b_i$ directly on the ball $B$, $g$ will not be supported inside $B$. To avoid this problem, let us use a Whitney partition of the ball $B$ : consider the dyadic cubes $Q \subset B$ which are maximal for the relation $l(Q) \leq \rho (Q, B^c)$. Call them $Q_j$, $j \in J$. They are mutually disjoint and they realize a partition of the ball $B$ but for a set of measure zero. Now, for $f \in L^1(B)$, with $\supp f \subset B$, we have $f = \sum_j f 1_{Q_j} \ \ \mu \, a. \, e.$ Set $f_j = f 1_{Q_j}$, and for every fixed $j$, write a Calder\'on-Zygmund decomposition of $f_j$ on $Q_j$: $f_j = g_j + \sum_{i \in I} b_{i,j}$ with
\begin{itemize}
\item[$\bullet$] $g_j = 1_{\Omega_{j,\alpha}^c} f_j$ where $\Omega_{j,\alpha} = \{ x \in Q_j \mid |f_j^{\ast}(x)| > \alpha \}$, $f_j^{\ast}$ denoting the dyadic maximal function of $f_j$ on $Q_j$. Thus $\supp g_j \subset \overline{Q_j}$, $g_j \in L^{\infty}$, $\|g_j \|_{\infty} \leq \alpha$, and $\|g_j\|_p \leq (\|g_j\|_{\infty}^{p-1} \|g_j\|_1)^{1/p} \leq \alpha^{1/{p'}} \|f_j\|_1^{1/p}$.
\item[$\bullet$] $\supp b_{i,j} \subset \overline{Q_{i,j}}$ where the sets $Q_{i,j}$ are dyadic subcubes of $Q_j$ of center $z_{Q_{i,j}}$, realizing in turn a Whitney partition of the open set $\Omega_{j,\alpha}$ : $\mu(\Omega_{j,\alpha} \backslash  \cup_i Q_{i,j}) =0$, $Q_{i,j}$ are mutually disjoint, and there exists a dimensional constant $C>C_1$ (where $C_1$ is the dimensional constant of Lemma \ref{cubes}) such that for every $i$, $B(z_{Q_{i,j}}, C l(Q_{i,j})) \cap \Omega_{j,\alpha}^c \neq \varnothing$. In addition, we have $[|b_{i,j}|]_{Q_{i,j}} \lesssim \alpha$ and $[b_{i,j}]_{Q_{i,j}} =0$.
\item[$\bullet$] $\|f_j\|_1 = \sum_i{\|b_{i,j}\|_1} + \|g_j\|_1$.
\end{itemize}
\smallskip
\noindent Now, set $g= \sum_j g_j$. Observe that we have $\supp g \subset B$, and $f = g + \sum_{i,j} b_{i,j}$. Applying \eqref{3etoile} for the couple $(p,p')$, and the disjointness of the dyadic cubes $Q_j$, we have
\begin{align*}
\mu(\{ x \in 2B \backslash B \mid |Tg(x)| > \alpha/2 \}) & \lesssim \frac{1}{\alpha^p} \int_{2B\backslash B}{|Tg|^p \dd \mu} \lesssim \frac{1}{\alpha^p} \int_{B}{|g|^p \dd \mu}\\
&  =  \frac{1}{\alpha^p}  \sum_j \int_{Q_j}{|g_j|^p \dd \mu}    =  \frac{1}{\alpha^p} \sum_j \|g_j\|^p_p \\
& \leq \frac{1}{\alpha} \sum_{j} \|f_j\|_{1}= \frac{1}{\alpha} \|f\|_{L^1(B)}.
\end{align*}
Also,
\begin{align*}
\mu  (  \{ x \in 2B \backslash B \mid   |T  (\sum_{i,j} b_{i,j}  )(x) | \! > \! \alpha/2  \} ) \! &  \leq \! \mu(\bigcup_{i,j} \widehat{Q_{i,j}}) \! + \!  \mu \Big ( (\bigcup_{i,j} \widehat{Q_{i,j}})^c \cap \{ x \in X \! \mid \!\! \sum_{i,j} | T b_{i,j} (x)| \! > \! \alpha/2 \}\Big)\\
& \lesssim \sum_{i,j}{\mu(Q_{i,j})}  + \mu(\{ x \in X \mid \sum_{i,j} |T b_{i,j}(x)| 1_{\widehat{Q_{i,j}}^c} > \alpha/2  \})\\
& \lesssim \sum_{i,j}{\mu(Q_{i,j})}  + \sum_{i,j} \frac{1}{\alpha} \int_{\widehat{Q_{i,j}}^c} {|T b_{i,j}| \dd \mu}.
\end{align*}
Since $b_{i,j}$ is of mean $0$ on $Q_{i,j}$, and because the kernel $\lambda(x,y)^{-1}$ satisfies the Hölder standard estimate, we can apply the standard Calder\'on-Zygmund estimates to obtain
\begin{align*}
\int_{\widehat{Q_{i,j}}^c} {|T b_{i,j}| \dd \mu}  & \lesssim \int_{\widehat{Q_{i,j}}^c}\int_{Q_{i,j}}{|b_{i,j}(x)| \left | \lambda(z_{Q_{i,j}},y)^{-1} - \lambda(x,y)^{-1}  \right | \dd \mu(x) \dd \mu(y)}\\
& \lesssim \|b_{i,j}\|_1 \int_{\widehat{Q_{i,j}}^c}{\frac{1}{\lambda(z_{Q_{i,j}},y)} \left( \frac{l(Q_{i,j})}{\rho(z_{Q_{i,j}},y)}\right ) \dd \mu(y)} \\
&\lesssim  \|b_{i,j}\|_1 \sum_{k \geq 0} \int_{ 2^k  l(Q_{i,j})  \leq \rho(y,z_{Q_{i,j}}) < 2^{k+1}  l(Q_{i,j})} { \frac{2^{-k }}{\mu(B(z_{Q_{i,j}}, 2^k  l(Q_{i,j})))} \dd \mu(y)} \\
& \lesssim  \|b_{i,j}\|_1 \sum_{k \geq 0} {2^{-k}} \lesssim \alpha \mu(Q_{i,j}) .
\end{align*}
Finally, we have
\begin{align*}
\mu  (  \{ x \in 2B \backslash B \mid   |T  (\sum_{i,j} b_{i,j}  ) (x)| > \alpha/2  \} ) & \lesssim \sum_{i,j} \mu(Q_{i,j}) \lesssim \sum_j \mu(\Omega_{j,\alpha}) \lesssim \sum_j \frac{1}{\alpha} \|f_j\|_{1} = \frac{1}{\alpha} \|f\|_{L^1(B)}.
\end{align*}
Thus, we get as expected
\[ \mu(\{ x \in 2B \backslash B \mid |Tf(x)| > \alpha \}) \lesssim \frac{1}{\alpha} \|f\|_{L^1(B)}. \]
By interpolation, it follows that we have $Tf \in L^q(2B \backslash B)$ for every $1<q \leq p$ and $f \in L^q(B)$, with $\supp f \subset B$.

\bigskip

\noindent $(3)$ The last part of our argument consists in applying some duality argument to conclude. \eqref{3etoile} for the couple $(p,p')$ implies that the adjoint operator $T^{\ast}$ is bounded from $L^{p'}(2B \backslash B)$ to $L^{p'}(B)$. We show again that $T^{\ast}$ is of weak type $(1,1)$: we prove that for all $f \in L^1(2B \backslash B)$, with $\supp f \subset 2B \backslash B$,
\[ \mu( \{ x \in B \mid |T^{\ast}f(x)| > \alpha \} ) \lesssim \frac{1}{\alpha} \|f\|_{L^1(2B \backslash B)}. \]
The idea is to use as before a Whitney partition of the open set $\overline{B}^c$: consider the dyadic cubes $Q_j \subset \overline{B}^c$ that are maximal for the relation $l(Q) \leq \rho(Q,B)$. They partition $\overline{B}^c $ but for a set of measure $0$. Of course, there are some of these $Q_j$ that intersect $(2B)^c$. To overcome this problem, let us keep only the cubes $Q_j, j \in J,$ intersecting the set $cB\backslash B$, with $1<c<\frac{C_1 +2}{C_1 +1}$. These cubes satisfy, for every $j \in J$, $l(Q_j) \leq \rho(Q_j,B) \leq (c-1)r$. Because of property $(4)$ of Christ's dyadic cubes (Lemma \ref{cubes}), it implies that for every $x \in Q_j$, $\rho(x, z_B) \leq cr + \diam Q_j \leq cr + C_1 (c-1)r <2r$, so that $Q_j \subset 2B \backslash B$. Now, for $f \in L^1(2B \backslash B)$, with $\supp f \subset 2B \backslash B$, write
\[ f = \sum_{j \in J} f 1_{Q_j} + f 1_{(2B\backslash B) \backslash \cup_{j \in J} Q_j} = \sum_{j \in J} f_j + \tau \quad \mu \, a.e.  \]
Remark that this is where we use the assumption $\mu(\bar{B}\backslash B) =0$, and it is the only time that we use it in our proof. Apply the same argument as in step $(2)$ to $f_j$, for every $j \in J$, to get
\[ \mu( \{ x \in B \mid |T^{\ast} (\sum_{j\in J} f_j)(x)| > \alpha/2 \} ) \lesssim \frac{1}{\alpha} \|f\|_{L^1(2B \backslash B)}. \]
For the remaining term $\tau$, observe that since $\Big ( (2B\backslash B) \backslash \cup_{j \in J} Q_j \Big ) \subset (cB)^c$, $\supp \tau \subset (cB)^c $, and we trivially have $T^{\ast} \tau \in L^1(B)$. Indeed, by the doubling property, we have
\[ \int_B |T^{\ast} \tau| \dd \mu \leq \int_B \int_{(cB)^c} \frac{|\tau(y)|}{\lambda(x,y)} \dd \mu(y) \dd \mu(x) \lesssim \| \tau \|_1 \lesssim \|f\|_{L^1(2B \backslash B)}. \]
Hence,
\[ \mu( \{ x \in B \mid |T^{\ast}\tau(x)| > \alpha \} ) \lesssim \frac{1}{\alpha} \|f\|_{L^1(2B \backslash B)}, \]
and $T^{\ast}$ is of weak type $(1,1)$. By interpolation, it follows that $T^{\ast} f \in L^q(B)$ for every $1<q \leq p'$ and $f \in L^q(2B \backslash B)$, with $\supp f \subset 2B\backslash B$. By duality, the Hardy property $(\mathrm{HP})$ is satisfied on $X$.
\end{proof}

\bigskip

\begin{remark}
Observe that the first step in our argument does not directly extend to the quasi-metric setting. However, R. Macias and S. Segovia \cite{MaSe} proved that if $\rho$ is a quasi-distance on $X$, there exists another quasi-distance $\rho'$ equivalent to $\rho$ (in the sense that $\rho(x,y) \eqsim \rho'(x,y)$ for every $x,y \in X$) which is Hölder regular. See also \cite{PalStem}  for an elegant proof of this result. To adapt our proof, one only has to define the new kernel $\widetilde{\lambda}$ of the first step of the argument with this quasi-metric $\rho'$ instead of $\rho$. The estimate \eqref{lipschitzreg} is then replaced by a Hölder regularity estimate, where the $\rho'(x,x'), \rho'(x,y)$ that appear can be replaced by $\rho(x,x'), \rho(x,y)$ because $\rho \eqsim \rho'$. For the next steps in the argument, one again assumes the kernel to satisfy the Hölder regularity estimate obtained and works as in the metric setting with the quasi-metric $\rho$, forgetting $\rho'$. Note that the existence of Christ's dyadic cubes given by Lemma \ref{cubes} remains valid in the quasi-metric setting (see \cite{Christ}), allowing us to do the same kind of Whitney coverings.
\end{remark}

%
%\begin{remarks}\label{remarksHardy1couple}\begin{enumerate}
%\item This result is interesting because, although we did not manage to produce a counterexample yet, we think that, similarly to \eqref{OLD} and \eqref{LUOLD}, \eqref{AD} and \eqref{LUAD} are not equivalent. Under \eqref{LUAD}, we already know that the Hardy property $(\mathrm{HP})$ is satisfied. What we have proven is that if we only assume \eqref{AD}, $(\mathrm{HP})$ is satisfied provided it is true for one particular couple $(p,p')$.
%\item It would be more satisfying to prove this result under the weaker layer decay property \eqref{OLD}. Indeed, we have seen that it is the key property in our study, and it is the one which truly expresses the distribution of points at the border of balls in the homogeneous space. But the regularity of the kernel $\frac{1}{\lambda(x,y)}$ only comes at the expense of \eqref{AD}.
%%Unfortunately, we only know how to obtain a Lipschitz regularity of the kernel under \eqref{AD}, and, for now, we do not know how to do better.
%\end{enumerate}
%\end{remarks}
%
\bigskip

\section{Sufficient conditions for the Hardy property}

\bigskip

\subsection{Layer decay properties \eqref{OLD}, \eqref{LUOLD}}

It is not clear when \eqref{3etoile} is true in a space of homogeneous type for a given ball $B$. In fact, it is false in general, as will be illustrated by some counterexamples in the following sections. It obviously depends on how the sets $B$ and $B^c$ see each other in $X$. By analogy with Christ's dyadic cubes, natural objects are the outer and inner layers $\{ x \in B | \rho(x,B^c) \leq \varepsilon \}$ and $\{ y \in B^c  | \rho(y,B) \leq \varepsilon  \}$. We shall assume they tend to zero in measure as $\varepsilon \rightarrow 0$, and in a scale invariant way, as expressed by the following definition.

\bigskip

\begin{definition}\label{oldluold}{Layer decay and relative layer decay properties.\\}
Let $(X,\rho,\mu)$ be a space of homogeneous type. For a ball $B$ in $X$, set $B_{\varepsilon} = \{ x \in B  | \rho(x,B^c) \leq \varepsilon  \} \cup \{ y \in B^c  | \rho(y,B) \leq \varepsilon  \}$ the union of the inner and outer layers.
\begin{itemize}
\item We say that $X$ has the \textbf{layer decay property} if there exist constants $\eta>0$, $C<+\infty$ such that for every ball $B=B(z,r)$ in $X$ and every $\varepsilon >0$, we have
\begin{equation} \label{OLD} \tag{LD}
\mu(B_{\varepsilon}) \leq C \left (\frac{\varepsilon}{r} \right )^{\eta} \mu(B(z,r)).
\end{equation}
\item We say that $X$ has the \textbf{relative layer decay property} if there exist constants $\eta>0$, $C<+\infty$ such that for every ball $B=B(z,r)$ in $X$, every ball $B(w,R)$ with $R \leq 2r$, and every $\varepsilon >0$, we have
\begin{equation} \label{LUOLD} \tag{RLD}
\mu\left (B_{\varepsilon}\cap B(w,R) \right ) \leq C \left (\frac{\varepsilon}{R} \right )^{\eta} \mu(B(w,R)).
\end{equation}
\end{itemize}
\end{definition}

\smallskip

\noindent This is Definition $9.1$ of \cite{AR}, with a minor modification to the relative layer decay property where we have substituted the condition $R \leq 2r$ to $z \notin B(w,R)$. We think this is a better definition, though equivalent, because this property is only relevant for small $R$ (else it says nothing), and when $R$ is small enough compared to $r$, if $B_{\varepsilon}\cap B(w,R)\neq \varnothing$ then necessarily $z \notin B(w,R)$.

\bigskip

The layer decay property already appeared in \cite{Buck2} (with only $\mu\left (\{ x \in B  | \rho(x,B^c) \leq \varepsilon  \}  \right )$ in the left hand side of \eqref{OLD}). These properties express the fact that the points are distributed in such a way in the homogeneous space that they never concentrate too much in the inner and outer layers of balls. One can note that while \eqref{OLD} is a global condition, a sort of averaging property over the whole layer, \eqref{LUOLD}  is a rather local condition. Remark also that \eqref{LUOLD} trivially implies \eqref{OLD}. Because of the opposed local and global nature of \eqref{LUOLD} and \eqref{OLD} though, it is sensible to think that these properties should not be equivalent. We will prove it in Section $7.2$.
%
% As a matter of fact, let $z \in X, r>0,  0<\varepsilon<r $, denote $B=B(z,r)$ and suppose that $\varepsilon<r/2$ (else the inequality is trivial). By the well known Vitali covering lemma (see \cite{Stein93} for example), if $B_{\varepsilon}$ is non empty, there exist points $w_k \in B_{\varepsilon}$ such that the balls $B(w_k,r/10)$ are mutually disjoint and $B_{\varepsilon}$ is covered by the union $\cup_k B(w_k,r/2)$. Now, write
%\begin{align*}
%\mu(B_{\varepsilon}) & = \mu(B_{\varepsilon} \cap (\cup_k {B(w_k,r/2)}))  \leq \sum_k{\mu(B_{\varepsilon} \cap B(w_k,r/2))}\\
%& \lesssim \sum_k{\left ( \frac{\varepsilon}{r} \right )^{\eta} \mu(B(w_k,r/2)) } \lesssim \left ( \frac{\varepsilon}{r} \right )^{\eta} \sum_k{\mu(B(w_k,r/10))}\\
%& \lesssim \left ( \frac{\varepsilon}{r} \right )^{\eta} \mu(B),\\
%\end{align*}
%where the second inequality comes from \eqref{LUOLD}, the third from the doubling property of $\mu$, and the last from the disjointness of the balls $B(w_k,r/10)$ and the fact that they are all inside a ball of measure comparable to $\mu(B)$.

\medskip

It turns out that the relative layer decay property constitutes a sufficient condition for the Hardy property $(\mathrm{HP})$:

\begin{prop}\label{LUOLDP}
Let $(X,\rho,\mu)$ be a space of homogeneous type, and suppose that $X$ has the relative layer decay property \eqref{LUOLD}. Then the Hardy property $(\mathrm{HP})$ is satisfied on $X$.
\end{prop}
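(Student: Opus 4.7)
Plan. My plan is to prove \eqref{3etoile} by a three-parameter dyadic decomposition that separates, at each scale of $\rho(x,y)$, the distances of $x$ and $y$ to the boundary of $B$. The ``far'' piece $\rho(x,y)\geq r$ is immediate: there $\lambda(x,y)\gtrsim \mu(B)$ by doubling, and a double H\"older gives the desired bound. For the ``near'' piece, writing $B=B(z,r)$, $d(y)=\rho(y,B^c)$ for $y\in B$ and $d(x)=\rho(x,B)$ for $x\in 2B\setminus B$, I set, for $k,j,l\geq 0$,
\[
\Sigma_{k,j,l}=\Big\{(x,y)\in (2B\setminus B)\times B:\ \rho(x,y)\in[2^{-k-1}r,2^{-k}r),\ d(y)\in[2^{-k-j-1}r,2^{-k-j}r),\ d(x)\in[2^{-k-l-1}r,2^{-k-l}r)\Big\}.
\]
Since $\rho(x,y)<2^{-k}r$ forces $d(x),d(y)<2^{-k}r$, the indices $j,l\geq 0$ genuinely cover the whole region (modulo $\mu$-null sets; $\overline B\setminus B$ is $\mu$-negligible by \eqref{LUOLD} $\Rightarrow$ \eqref{OLD}).

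On each $\Sigma_{k,j,l}$, doubling gives $\lambda(x,y)\eqsim\mu_k:=\mu(B(y,2^{-k}r))$, so H\"older on the product measure $\mu\otimes\mu$ yields
\[
I_{k,j,l}:=\int\!\!\int_{\Sigma_{k,j,l}}\!\!\frac{|f(y)g(x)|}{\lambda(x,y)}\dd\mu(x)\dd\mu(y)\lesssim \mu_k^{-1}\Big(\!\int|f|^{\nu}\mu(\Sigma_{k,j,l}^{x|y})\dd\mu(y)\!\Big)^{\!1/\nu}\Big(\!\int|g|^{\nu'}\mu(\Sigma_{k,j,l}^{y|x})\dd\mu(x)\!\Big)^{\!1/\nu'},
\]
where $\Sigma_{k,j,l}^{x|y}=\{x:(x,y)\in\Sigma_{k,j,l}\}$ and symmetrically. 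The first slice sits inside $B(y,2^{-k}r)\cap B_{2^{-k-l}r}$ (outer layer of width $2^{-k-l}r$), so \eqref{LUOLD} with $w=y$, $R=2^{-k}r\leq 2r$, $\varepsilon=2^{-k-l}r$ gives $\mu(\Sigma_{k,j,l}^{x|y})\lesssim 2^{-l\eta}\mu_k$; symmetrically, $\mu(\Sigma_{k,j,l}^{y|x})\lesssim 2^{-j\eta}\mu_k$. Using $1/\nu+1/\nu'=1$, the $\mu_k$ factors collapse and
\[
I_{k,j,l}\lesssim 2^{-l\eta/\nu-j\eta/\nu'}\|\tilde f_{k+j}\|_{\nu}\|\tilde g_{k+l}\|_{\nu'},
\]
where $\tilde f_m$ and $\tilde g_n$ are the restrictions of $f,g$ to $\{d(y)\in[2^{-m-1}r,2^{-m}r)\}$ and $\{d(x)\in[2^{-n-1}r,2^{-n}r)\}$ respectively.

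To sum $\sum_{k,j,l}I_{k,j,l}$ I change variables $(k,j,l)\mapsto(k,m,n)$ with $m=k+j$, $n=k+l$. For fixed $(m,n)$ the inner sum over $0\leq k\leq\min(m,n)$ is geometric, $\sum_k 2^{k\eta}\lesssim 2^{\min(m,n)\eta}$, which combines with $2^{-n\eta/\nu-m\eta/\nu'}$ to give $2^{-|m-n|\alpha}$ with $\alpha:=\eta/\max(\nu,\nu')>0$. Hence
\[
I\lesssim \sum_{m,n\geq 0}2^{-|m-n|\alpha}\,\|\tilde f_m\|_{\nu}\|\tilde g_n\|_{\nu'},
\]
and this closes by a Schur-type H\"older for series, applied to the convolution kernel $c_{m,n}=2^{-|m-n|\alpha}$, together with the disjointness identities $\sum_m\|\tilde f_m\|_{\nu}^{\nu}\leq\|f\|_{\nu}^{\nu}$ and $\sum_n\|\tilde g_n\|_{\nu'}^{\nu'}\leq\|g\|_{\nu'}^{\nu'}$.

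The main obstacle is that \eqref{LUOLD} provides decay only in the \emph{fine} parameters $j$ and $l$ measuring extra proximity to $\partial B$ beyond the scale $2^{-k}r$; it gives no intrinsic decay in the dyadic distance index $k$, so a direct triple sum $\sum_{k,j,l}$ would diverge in $k$. The change of variables $(j,l,k)\mapsto(m,n,k)$ together with the geometric summation in $k$ is what transforms the fine decay into effective $|m-n|$--decay, after which the Schur argument closes. A minor subsidiary point is verifying that \eqref{LUOLD} is always legitimately applicable ($R=2^{-k}r\leq r\leq 2r$) and that the $\mu$-null boundary $\overline B\setminus B$ is irrelevant, both of which follow directly from the standing hypothesis.
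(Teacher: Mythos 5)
Your argument is correct, and it is organized quite differently from the paper's proof, even though both ultimately extract the same geometric decay from \eqref{LUOLD}. The paper splits the bilinear form according to whether $\rho(y,B)\leq\rho(x,B^c)$ or not, then, for each fixed $x$, integrates $y$ over dyadic coronae $2^j\rho(x,B^c)\leq\rho(x,y)<2^{j+1}\rho(x,B^c)$; an auxiliary H\"older exponent $\sigma<\nu$ turns the \eqref{LUOLD} ratio into $2^{-j\eta/\sigma'}$, the sum in $j$ is dominated by $M_\mu(|f|^\sigma)(x)^{1/\sigma}$, and the Hardy--Littlewood maximal theorem closes the estimate. You instead stratify globally in three dyadic parameters (the scale of $\rho(x,y)$ and the distances of $x$ and $y$ to $\partial B$), apply \eqref{LUOLD} to each slice, and close with the substitution $(k,j,l)\mapsto(k,m,n)$ followed by discrete Young/Schur for the kernel $2^{-|m-n|\alpha}$. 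What the paper's route buys is brevity and a reusable pointwise bound via the maximal function (at the cost of invoking its $L^p$ boundedness and the $\sigma<\nu$ self-improvement trick); what yours buys is a completely elementary, maximal-function-free argument that makes visible exactly where the boundedness could fail (divergence of the $k$-sum absent the off-diagonal decay in $|m-n|$). Two small presentational points: your $\mu_k=\mu(B(y,2^{-k}r))$ depends on $y$, so in the H\"older step you should keep $\mu_k(y)^{-1/\nu}$ with the $f$-factor and $\mu_k(y)^{-1/\nu'}$ with the $g$-factor, and in the latter use doubling to replace $\mu(B(y,2^{-k}r))$ by $\mu(B(x,2^{-k}r))$ before invoking \eqref{LUOLD} centered at $x$; and the null set you need to discard in the $y$-variable is $\{y\in B:\rho(y,B^c)=0\}$ rather than $\overline B\setminus B$, though both are indeed $\mu$-negligible by \eqref{OLD}. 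Neither affects the validity of the proof.
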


\noindent This is Proposition $9.2$ of \cite{AR}. To shed some light on what will come next, let us recall the proof here. The argument is quite similar to the one we used in the dyadic setting (see the proof of Lemma $2.4$ of \cite{AR}), with adaptations owing to the fact one cannot use exact coverings with balls as for dyadic cubes. 

\begin{proof}
 Fix a ball $B$ of center $z$ and radius $r>0$, and functions $f,g$ respectively supported on $2B\backslash B$ and $B$ with $f \in L^{\nu}$, $g \in L^{\nu'}$. Let $I$ be the integral in \eqref{3etoile}. For a locally integrable function $f$, we will denote by $M_{\mu}(f)$ the centered maximal function
\[ M_{\mu}(f)(x) = \sup_{\tau >0} \frac{1}{\mu(B(x,\tau))} \int_{B(x,\tau)} |f(y)| \dd \mu(y). \]
Recall that by the Hardy-Littlewood maximal theorem, $M_{\mu}$ is of strong type $(p,p)$ for any $1<p<+\infty$, and of weak type $(1,1)$. We refer for example to \cite{Stein93} for further details. We prove that for all $1 < \sigma,s < \infty, $ we have
\begin{equation}\label{inthardy}
I \lesssim   \left \langle \left (M_\mu (|f|^{\sigma}) \right )^{1/\sigma} , |g| \right \rangle + \left \langle |f| , \left (M_\mu (|g|^s) \right )^{1/s} \right \rangle  ,
\end{equation}
and the Hardy-Littlewood maximal theorem then gives the desired result, choosing $1 < \sigma< \nu$ and $1 < s < \nu'$. Without loss of generality, we can assume $f,g \geq 0$. The hypotheses imply $\mu(\overline{B}\backslash B) = 0$ and allow us to write
\[ I  =  \int_{x \in B \atop \rho (x,B^c)>0}{\!\!\!\! g(x) \! \int_{y \in 2B \backslash B  \atop \rho (y,B) \leq \rho (x,B^c) }{\!\! \frac{f(y)}{\lambda(x,y)}\dd \mu(y)\dd \mu(x)}} + \int_{y \in 2B \backslash B \atop \rho (y,B)>0}{\!\! f(y) \! \int_{x \in B \atop \rho (y,B) > \rho (x,B^c)}{\!\! \frac{g(x)}{\lambda(x,y)}\dd \mu(x)\dd \mu(y)}}. \]
Let us begin by estimating the first term. Fix $x \in B$. As in the dyadic setting, let
\[ E_x = \left \{ y \in 2B \backslash B \, | \, \rho (y,B) \leq \rho (x,B^c) \right \}.\]
We decompose the integral in $y$ over coronae at distance $2^j \rho(x,B^c)$ from $x$:
\begin{align*}
 I_1(x) & =  \int_{y \in E_x }{\frac{f(y)}{\lambda(x,y)} \dd \mu(y)}  \\
 & \lesssim \ \  \ \ \sum_{j \geq 0} \ \  \ \ \int_{y \in E_x \atop 2^j \rho(x,B^c) \leq \rho(x,y) < 2^{j+1}\rho(x,B^c)}{\frac{f(y)}{\mu(B(x,\rho(x,y)))}\dd \mu(y)}\\
& \lesssim  \sum_{j\geq 0 \atop z \notin B(x, 2^{j+1}\rho(x,B^c))} M_{\mu}(f^{\sigma})(x) ^{1/\sigma} \left ( \frac{\mu(E_x \cap B(x,2^{j+1}\rho(x,B^c)))}{\mu( B(x,2^{j+1}\rho(x,B^c)))} \right )^{\frac{1}{\sigma'}}\\
& + \sum_{j \geq 0 \atop z \in B(x, 2^{j+1}\rho(x,B^c))} \frac{1}{\mu(B(x,2^{j+1}\rho(x,B^c)))} \int_{y \in E_x \atop 2^j \rho(x,B^c) \leq \rho(x,y) < 2^{j+1}\rho(x,B^c) }{f\, \dd \mu},\\
\end{align*}
where the last inequality is obtained applying the Hölder inequality with $\sigma>1$. Observe that there are at most four integers $j \geq 0$ such that $z \in B(x, 2^{j+1}\rho(x,B^c))$ and $E_x \cap \{ y \in B^c | 2^j \rho(x,B^c) \leq \rho(x,y) < 2^{j+1}\rho(x,B^c) \} \neq \varnothing$. Indeed, let $j_0$ be the first such integer, which implies $\rho(x,z) \leq 2^{j_0 +1} \rho(x,B^c)$, and let $j\geq j_0$ be another one. Let $y \in E_x \cap \{ y \in B^c | 2^j \rho(x,B^c) \leq \rho(x,y) < 2^{j+1}\rho(x,B^c) \}$. Using $y \in E_x$, we have $\rho(z,y) \leq r + \rho(y,B) \leq r + \rho(x,B^c)$. Also, $r \leq \rho(x,z)+\rho(x,B^c)$. Hence $\rho(z,y) \leq \rho(x,z) +2 \rho(x,B^c)$. Using $2^j \rho(x,B^c) \leq \rho(x,y)$, we obtain
\[ 2^j \rho(x,B^c) \leq \rho(x,y)\leq \rho(x,z) + \rho(z,y) \leq 2 \rho(x,z) +2 \rho(x,B^c) \leq  (2^{j_0 +2}+2) \rho(x,B^c), \]
hence $j \leq j_0 +3.$\\
Moreover, if $z \notin B(x, 2^{j+1}\rho(x,B^c))$, since $x \in B$, we have $2^{j+1}\rho(x,B^c) \leq 2r$. Consequently, we can apply \eqref{LUOLD} and we get
\[ I_1(x) \lesssim  M_{\mu}(f^\sigma)(x) ^{1/\sigma} \sum_{j\geq 0} {\left ( \frac{\rho(x,B^c)}{2^{j+1}\rho(x,B^c)}\right )^{\frac{\eta}{\sigma'}}} + 4M_{\mu}f(x) \lesssim M_{\mu}(f^\sigma)(x) ^{1/\sigma}, \]
so the first integral is controlled by $\langle g,  (M_\mu (f^{\sigma})  )^{1/\sigma} \rangle$. The argument for the second integral is entirely similar using the symmetry of our assumptions (and $4$ above becomes $3$). This proves \eqref{inthardy}.
\end{proof}

\bigskip

\begin{remarks}\begin{enumerate}
\item Obviously, if $X$ satisfies the layer decay property \eqref{OLD}, then we have $\mu(\bar{B} \backslash B) =0$ for every ball $B \subset X$. Thus, with Proposition \ref{LUOLDP} and Proposition \ref{Hardy1couple}, we have proved that if $X$ satisfies \eqref{LUOLD}, then $X$ satisfies $(\mathrm{HP})$, and if $X$ satisfies only \eqref{OLD}, then $X$ satisfies $(\mathrm{HP})$ provided it satisfies the Hardy inequality \eqref{3etoile} for one \mbox{particular} couple of exponents $(p,p')$.
\item Note that if $f$ and $g$ are taken in $L^{\nu_1}(X)$ and $L^{\nu_2}(X)$ with $1/{\nu_1}+1/{\nu_2} <1$ (which is stronger than $g \in L^{\nu_{1}'}$), then if $I$ denotes the integral in \eqref{3etoile}, the normalised inequality
\[  \frac{1}{\mu(B)} I   \lesssim \|f\|_{L^{\nu_1}\left (2B \backslash B, \frac{\dd \mu}{\mu(B)}\right )}  \|g\|_{L^{\nu_2}\left (B, \frac{\dd \mu}{\mu(B)}\right )}   \]
is true in any space of homogeneous type only satisfying \eqref{OLD}. The proof is in this case much easier and we refer to \cite{AR} for the detail.
% it here: let $B=B(z,r)$ be a ball in $X$, $f$ be supported on $2B \backslash B$, with $f \in L^{\nu_1}(2B \backslash B)$, $g$ be supported on $B$ with $g \in L^{\nu_2}(B)$, denote $1/{\nu} = 1 - 1/{\nu_1} - 1/{\nu_2}$, $S_j = \{ x \in B: \rho(x,B^c) \leq 2^{-j} r \}$. Observe that by \eqref{OLD}, $\mu(S_j) \lesssim 2^{-j \eta} \mu(B)$, and splitting the integral in $y$ over coronae at distance $2^{-j}r$ from $x$, write
%\begin{align*}
%I & \lesssim \sum_{j\geq -2} \int_{x \in B}{|g(x)|}\int_{y \in 2B \backslash B \atop 2^{-j-1} r < \rho(x,y) \leq 2^{-j} r}{\frac{|f(y)|}{\mu(B(x,2^{-j}r ))}\dd \mu(y)\dd \mu(x)}\\
%& \lesssim \sum_{j\geq -2} \int_{x \in S_j}{|g(x)|M_{\mu}{f}(x) \dd \mu(x)}  \lesssim  \sum_{j\geq -2} \int_{x \in X}{|g(x)|M_{\mu}{f}(x) 1_{S_j}(x) \dd \mu(x)} \\
%& \lesssim  \| f \|_{\nu_1} \|M_{\mu}g\|_{\nu_2} \sum_{j\geq -2} \mu(S_j)^{1/{\nu}},
%\end{align*}
%where we have applied the Hölder inequality to get the last line. It remains to sum in $j$ to obtain the desired estimate. The same remark with identical proof holds if $B, 2B\backslash B$ are replaced by $Q, \widehat{Q} \backslash Q$ for $Q$ a dyadic cube.
\end{enumerate}
\end{remarks}

\bigskip

\subsection{Annular decay properties \eqref{AD}, \eqref{LUAD}}

Interestingly, another very similar condition already appeared in the literature (see for example \cite{MaSe}, \cite{DJS}, \cite{Buck}, \cite{Tess}). It is the notion of annular decay that we recall now.

\begin{definition}{Annular decay and relative annular decay properties.\\}
Let  $(X,\rho,\mu)$ be a space of homogeneous type. For $z \in X$ and $r>0,0<s<r$, set $C_{r,r-s}(z) = B(z,r) \backslash B(z,r-s)$.
\begin{itemize}
\item We say that $X$ has the \textbf{annular decay property} if there exist constants $\eta>0$ and $C<+\infty$ such that for every $z \in X, r>0,0<s<r$, we have
\begin{equation} \label{AD} \tag{AD}
\mu(C_{r,r-s}(z)) \leq C \left (\frac{s}{r} \right )^{\eta} \mu(B(z,r)).
\end{equation}
\item We say that $X$ has the \textbf{relative annular decay property} if there exist constants $\eta>0$ and $C<+\infty$ such that for every $z \in X$, $r>0,0<s<r$, and every ball $B(w,R)$ with $R \leq 2r$, we have
\begin{equation} \label{LUAD} \tag{RAD}
\mu ( C_{r,r-s}(z) \cap B(w,R) ) \leq C \left (\frac{s}{R} \right )^{\eta} \mu(B(w,R)).
\end{equation}
\end{itemize}
\end{definition}
\noindent Once again, this is Definition $9.4$ of \cite{AR}, with the same minor modification in the definition of the relative annular decay property than before. Note that this condition \eqref{AD} was made an assumption in \cite{DJS} for the first proof of the global $Tb$ theorem in a space of homogeneous type. Again, \eqref{AD} is a global property while \eqref{LUAD} is a local one. Similarly as for layer decay properties, we have that \eqref{LUAD} implies \eqref{AD}. Observe that for a ball $B=B(x,r)$, with $x \in X$, $r>0$, we have, if $\varepsilon >0$,
\[ B_{\varepsilon} = \{ y \in C_{r,r-\varepsilon}(x) \mid \rho(y,B^c) \leq \varepsilon \} \cup \{ y \in C_{r+2 \varepsilon,r}(x) \mid \rho(y,B) \leq \varepsilon \} .   \]
It follows that $B_{\varepsilon} \subset C_{r+2\varepsilon,r-\varepsilon}(x)$ and thus \eqref{LUAD} (respectively \eqref{AD}) implies \eqref{LUOLD} (respectively \eqref{OLD}). In particular, \eqref{LUAD} is a sufficient condition for the Hardy property $(\mathrm{HP})$ because of Proposition \ref{LUOLDP}.

\bigskip

\section{Geometric properties ensuring the relative layer decay property}

\bigskip

\subsection{Monotone geodesic property \eqref{monotone}}

In \cite{Buck}, Buckley introduces the notion of chain ball spaces and proves that under that condition, a doubling metric measure space satisfies \eqref{AD}. Colding and Minicozzi II already had proved that this property was satisfied by doubling complete riemannian manifolds in \cite{CM}. Tessera introduced a notion of monotone geodesic property in \cite{Tess}, and proved that this property also implies \eqref{AD} (called there the Føllner property for balls) in a doubling metric measure space. Lin, Nakai and Yang recently showed in \cite{LNY} that chain ball and a slightly stronger scale invariant version of the monotone geodesic are equivalent. It is the latter that will interest us.
%Let us introduce this property.

\begin{definition}
Let $(X,\rho)$ be a metric space. We say that $X$ has the \textbf{monotone geodesic property} $(\mathrm{M})$ if there exists a constant $0< C<+\infty$ such that for all $u>0$ and all $x,y \in X$ with $\rho(x,y) \geq u$, there exists a point $z \in X$ such that
\begin{equation} \label{monotone}  \tag{M}
\rho(z,y) \leq Cu \quad \mathrm{and} \quad \rho(z,x) \leq \rho(y,x) - u.
\end{equation}
\end{definition}

\medskip

\noindent Remark that $C$ must satisfy $C\geq 1$. Remark also that iterating this property, one gets that for every $x,y \in X$ with $\rho(x,y) \geq u$, there exists a sequence of points $y_0=y, y_1,..., y_m=x$ such that for every $i \in \{0,...,m-1 \}$
\[ \rho(y_{i+1},y_i) \leq Cu \quad \mathrm{and} \quad \rho(y_{i+1},x) \leq \rho(y_i,x) - u.  \]

Observe that this is a purely metric property. It is obviously satisfied by complete doubling Riemannian manifolds. It is also satisfied by any geodesic space or length space (see \cite{Burago} for a definition). It appears that \eqref{monotone} not only yields the annular decay property, but also, as was proved in \cite{AR}, the stronger relative annular decay property.

\medskip

\begin{prop}\label{Delta}
Let $(X,\rho,\mu)$ be a space of homogeneous type, and suppose that $X$ has the monotone geodesic property \eqref{monotone}. Then $X$ has the relative annular decay property \eqref{LUAD}.
\end{prop}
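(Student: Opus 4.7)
Plan: I would prove (RAD) by establishing a single-scale decay inequality and iterating it over a geometric sequence of scales. The single-scale claim is: there exist $\theta \in (0,1)$ and $\beta > 1$, depending only on $C$ (the constant in \eqref{monotone}) and $C_D$, such that for all admissible $z, w \in X$, $r>0$, $R\leq 2r$, and $s$ with $\beta s \leq r$,
\begin{equation*}
\mu\bigl(C_{r,r-s}(z) \cap B(w,R)\bigr) \;\leq\; \theta \,\mu\bigl(C_{r,r-\beta s}(z) \cap B(w, R + 3Cs)\bigr).
\end{equation*}
Iterating this $N$ times with $N \sim \log_{\beta}(R/s)$ so that $\beta^N s \sim R$ produces
$\mu(A) \leq \theta^N \mu\bigl(B(w, R + 3Cs \tfrac{\beta^N-1}{\beta -1})\bigr)$; for $\beta$ chosen large enough relative to $C$, the fattened radius is at most $R(1 + 3C/(\beta-1)) \leq 2R$, whose measure is $\lesssim \mu(B(w,R))$ by doubling. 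Hence $\mu(A) \lesssim \theta^N \mu(B(w,R)) \eqsim (s/R)^\eta \mu(B(w,R))$ with $\eta = \log(1/\theta)/\log \beta > 0$, which is (RAD). The residual range $s > r/\beta$ is trivial, since then $(s/R)^\eta$ is bounded below by an absolute constant (using $R\leq 2r$) while $\mu(A) \leq \mu(B(w,R))$.

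For the single-scale inequality, fix $x \in A := C_{r,r-s}(z) \cap B(w,R)$, so $\rho(x,z) \in [r-s, r)$. Apply \eqref{monotone} twice in succession with $u=s$: once to the pair $(z,x)$ (producing $y^{(1)}$) and once to $(z, y^{(1)})$ (producing $y := y^{(2)}$); both applications are legitimate for $\beta \geq C+2$ since then $\rho(x,z) \geq r - s \geq s$ and $\rho(y^{(1)}, z) \geq r - (C+1)s \geq s$. The outputs satisfy $\rho(y,x) \leq 2Cs$ and $\rho(y,z) \in [r-(2C+1)s,\, r-2s]$, and for $\beta \geq 2C+2$ the triangle inequality yields $B(y, s/5) \subset C_{r-s, r-\beta s}(z) \cap B(w, R+3Cs)$, a subset of $\bigl(C_{r, r-\beta s}(z) \cap B(w, R+3Cs)\bigr) \setminus A$. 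Extract a Vitali family $\{x_i\} \subset A$ with the balls $B(x_i, s/5)$ pairwise disjoint and $\{B(x_i,s)\}$ covering $A$, and set $y_i := y(x_i)$. Doubling gives $\mu(B(x_i, s/5)) \eqsim \mu(B(y_i, s/5))$ since $\rho(x_i,y_i) \leq 2Cs$, and a standard doubling-counting argument bounds the overlap multiplicity of $\{B(y_i, s/5)\}$ by a constant $N_0 = N_0(C, C_D)$: any point $p$ in some $B(y_i, s/5)$ has $x_i \in B(p, 3Cs)$, so the pairwise disjoint $B(x_i, s/5)$ all sit in $B(p, 4Cs)$, forcing their number to be $\lesssim 1$ by doubling. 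Summing,
\begin{equation*}
\mu(A) \;\leq\; \sum_i \mu(B(x_i, s)) \;\lesssim\; \sum_i \mu(B(y_i, s/5)) \;\leq\; N_0 \,\mu\bigl((C_{r,r-\beta s}(z) \cap B(w, R+3Cs)) \setminus A\bigr),
\end{equation*}
which rearranges to $\mu(A) \leq (N_0/(N_0+1))\,\mu(C_{r,r-\beta s}(z)\cap B(w, R+3Cs))$, proving the single-scale inequality with $\theta := N_0/(N_0+1) < 1$.

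The main obstacle I anticipate is the careful bookkeeping of constants: making sure the overlap multiplicity $N_0$, the doubling losses, and the cumulative enlargement of the reference ball through iteration all remain uniform in the parameters $z, w, r, R, s$. The key enabling fact is that the monotone geodesic partner $y(x)$ sits at distance at most $2Cs$ from $x$, so every relevant comparison happens at scale $s$ and the doubling property applies with a single absolute constant at each step; choosing $\beta$ sufficiently large relative to $C$ then tames the geometric sum governing the ball enlargement, and the iterated chain of inequalities telescopes cleanly to a measure comparable to $\mu(B(w,R))$ rather than to an uncontrolled fattening.
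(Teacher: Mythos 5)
Your argument is correct and is essentially the approach the paper relies on: it defers the proof to \cite{AR} (Proposition $9.6$), which adapts Tessera's chain argument ``with more care on localization,'' and that is exactly your scheme of a single-scale decay $\mu(A)\leq\theta\,\mu(C_{r,r-\beta s}(z)\cap B(w,R+3Cs))$ obtained from the monotone geodesic chain plus a Vitali covering and bounded-overlap count, iterated over scales with the cumulative fattening of $B(w,R)$ kept below $2R$. The constant bookkeeping you flag (choosing $\beta\gtrsim C$, stopping the iteration at $\beta^N s\eqsim\min(r,R)$, and handling the residual range trivially) all checks out.
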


\medskip

\noindent We refer again to \cite{AR} (Proposition $9.6$) for the proof of this result (our modification on \eqref{LUAD} has no impact). The argument essentially adapts the one in \cite{Tess} with more care on localization.

\bigskip

\begin{remark}
Observe that conversely, neither $(\mathrm{HP})$ nor \eqref{LUAD} imply \eqref{monotone}. Let us give two examples to illustrate this. First consider the space formed by the real line from which an arbitrary interval has been withdrawn, equipped with the Euclidean distance and Lebesgue measure. This space obviously does not have the monotone geodesic property, as, to put it roughly, there is a hole in it. On the other hand, this space clearly satisfies the Hardy property, as a consequence of $(\mathrm{HP})$ on the real line, as well as \eqref{LUAD}. The second example is a connected one: consider the space made of the three edges of an arbitrary triangle in the plane, again equipped with the induced Euclidean distance and Lebesgue measure. This space has the Hardy property, once again as a straightforward consequence of the fact that the unit circle has it and easy change of variables. It easily follows from the fact that one of the angles must be less than $\pi/2$ that it does not have the monotone geodesic property: one of the pairs $(x,y)$ with $x$ a vertex and $y$ its orthogonal projection on the opposite side cannot meet condition \eqref{monotone}. In passing, it proves that this property is not stable under bi-Lipschitz mappings (see also \cite{Tess}).
\end{remark}
\bigskip

\subsection{Homogeneous balls property $(\mathrm{HB})$}Carefully looking into the proof of the Hardy inequality in the dyadic setting (Lemma $2.4$ of \cite{AR}), it is easy to see that if $B$ and $X\backslash B$ are themselves spaces of homogeneous types with uniform constants, then there will be no difficulty to prove that $(\mathrm{HP})$ is satisfied, as one can then adapt the proof using Christ's dyadic cubes both on $B$ and $X \backslash B$. This motivates the following definition.

\begin{definition}
Let $(X,\rho,\mu)$ be a space of homogeneous type. Let $B$ be a ball in $X$, and suppose that $B$ and $B^c$ are themselves spaces of homogeneous type with doubling constant $C_B$, \emph{i.e.} for all $x \in B$, $y \in B^c$, and all $r>0$, we have
\[ \mu(B(x,2r) \cap B) \leq C_B \ \ \mu(B(x,r) \cap B), \quad \quad \mu(B(y,2r) \cap B^c) \leq C_B \ \ \mu(B(y,r) \cap B^c). \]
We say that $X$ has the \textbf{homogeneous balls property} $(\mathrm{HB})$ if this is satisfied by all the balls in $X$ and if \[ \sup_{B\subset X}{C_B} < +\infty. \]
\end{definition}

\medskip

\begin{prop}
Let $(X,\rho,\mu)$ be a space of homogeneous type.
\begin{enumerate}
\item If $X$ has the homogeneous balls property $(\mathrm{HB})$, then $X$ has the relative layer decay property $\eqref{LUOLD}$.
\item If $X$ has the homogeneous balls property $(\mathrm{HB})$, then $X$ has the Hardy property $(\mathrm{HP})$.
\end{enumerate}
\end{prop}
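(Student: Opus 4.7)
The plan is as follows. Both assertions will follow from a single argument: since Proposition \ref{LUOLDP} shows that \eqref{LUOLD} implies $(\mathrm{HP})$, it is enough to prove assertion (1); assertion (2) then drops out. (Alternatively, (2) admits a direct proof along the lines sketched in the paragraph preceding the statement: one adapts the dyadic Hardy estimate of Lemma 2.4 of \cite{AR}, running it with cubes from the two Christ systems made available by $(\mathrm{HB})$.)

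Fix $B=B(z,r)$, a ball $B(w,R)$ with $R \leq 2r$, and $\varepsilon > 0$. By $(\mathrm{HB})$, both $(B,\rho,\mu|_B)$ and $(B^c,\rho,\mu|_{B^c})$ are spaces of homogeneous type with doubling constants $\leq C_B$ uniformly in $B$. I apply Lemma \ref{cubes} inside each to obtain Christ dyadic systems $\{P^j_\alpha\}\subset B$ and $\{Q^j_\alpha\}\subset B^c$, whose parameters $(\delta,a_0,\eta,C_1,C_2)$ depend only on $C_B$; in particular the small boundary condition \eqref{petitefrontiere} is in force on each side with a common exponent $\eta>0$. I then write $B_\varepsilon\cap B(w,R)=L^{\mathrm{in}}\cup L^{\mathrm{out}}$ with $L^{\mathrm{in}}=\{x\in B\cap B(w,R):\rho(x,B^c)\leq \varepsilon\}$ and $L^{\mathrm{out}}=\{y\in B^c\cap B(w,R):\rho(y,B)\leq \varepsilon\}$; by the symmetry between the constructions on $B$ and $B^c$ it is enough to estimate $\mu(L^{\mathrm{in}})$.

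Pick the generation $j_0$ with $\delta^{j_0}\eqsim R$ and consider the cubes $\tilde P=P^{j_0}_\alpha$ of $B$ that meet $B(w,R)\cap B$. By doubling inside $B$ only $O(1)$ such cubes appear, each of measure $\lesssim \mu(B(w,R))$, so it suffices to prove
\[
\mu(\tilde P\cap L^{\mathrm{in}})\lesssim (\varepsilon/R)^{\eta}\mu(\tilde P).
\]
The small boundary condition \eqref{petitefrontiere} applied to $\tilde P$ inside $B$, with $t\eqsim \varepsilon/R$, already absorbs every $x\in \tilde P$ at ambient distance $\leq \varepsilon$ of $B\setminus \tilde P$. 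The leftover set is $\{x\in\tilde P\cap L^{\mathrm{in}}: \rho(x, B\setminus \tilde P)>\varepsilon\}$: geometrically, points at which $\tilde P$ itself abuts $\partial B$ in $X$. The plan is to match $\tilde P$ with a neighboring $B^c$-cube $\tilde Q=Q^{j_0}_\beta$ sitting on the other side of $\partial B$, and to run the small boundary condition \eqref{petitefrontiere} for $\tilde Q$ inside $B^c$ in order to account for these leftover points. A dyadic-shell decomposition in the value of $\rho(x,B^c)\in [2^{-k-1}\varepsilon, 2^{-k}\varepsilon]$ then combines the two boundary estimates into a single $(\varepsilon/R)^\eta$ factor.

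The main obstacle is the cross-boundary matching in the last step: converting the small boundary bound on the $B^c$-side into a uniform bound on the $L^{\mathrm{in}}$-layer on the $B$-side, while keeping the same exponent $\eta$ and avoiding logarithmic losses from the shell decomposition. The remaining bookkeeping (covering at scale $R$, symmetry between $B$ and $B^c$, and the appeal to Proposition \ref{LUOLDP} for part (2)) is routine by comparison.
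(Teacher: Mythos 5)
Your reduction of (2) to (1) via Proposition \ref{LUOLDP}, and the overall strategy for (1) (Christ systems on $B$ and on $B^c$ with uniform constants, small boundary condition \eqref{petitefrontiere}), coincide with the paper. But the proposal is not a proof: the decisive step is exactly the one you flag as ``the main obstacle,'' and the mechanism you propose for it cannot work as stated. The small boundary estimate for a cube $\tilde Q$ of the $B^c$-system bounds the measure of a subset of $\tilde Q\subseteq B^c$; your leftover set $\{x\in\tilde P\cap L^{\mathrm{in}}:\rho(x,B\setminus\tilde P)>\varepsilon\}$ is a subset of $B$. There is no a priori way to transfer a measure bound across $\partial B$ from one side to the other --- indeed, the possibility that $\mu$ concentrates near $\partial B$ on one side independently of the other is precisely what makes the layer decay properties fail in the counterexamples of Sections 6 and 7 --- and a dyadic-shell decomposition in $\rho(x,B^c)$ supplies no such transfer. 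So the argument does not close.

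The paper avoids the leftover set altogether by a simpler observation: for a cube $Q^j_\beta$ of the $B$-system one has $B^c\subseteq(Q^j_\beta)^c$, hence every $y\in C_\varepsilon\cap Q^j_\beta$ satisfies $\rho(y,(Q^j_\beta)^c)\le\rho(y,B^c)\le\varepsilon$; thus the \emph{entire} inner layer inside each cube already lies in that cube's small-boundary layer and \eqref{petitefrontiere} applies to all of it at once. (Note this is exactly the discrepancy your leftover set isolates, namely the difference between measuring the distance to $B\setminus\tilde P$ and to $X\setminus\tilde P$; the paper resolves it by working with the latter.) A second difference: the paper does not take cubes at scale $R$ but at the intermediate generation $j=N+[\frac{m+l}{2}]$, i.e.\ $\delta^j\eqsim\sqrt{\varepsilon R}$, applies \eqref{petitefrontiere} with $t\eqsim\delta^{N+m-j}\eqsim\sqrt{\varepsilon/R}$ to get $\mu(C_\varepsilon\cap Q^j_\beta)\lesssim(\varepsilon/R)^{\eta/2}\mu(Q^j_\beta)$, observes that the cubes meeting $C_\varepsilon\cap B(w,R)$ are pairwise disjoint and contained in $B(w,CR)$, and sums using doubling --- so the conclusion comes with exponent $\eta/2$, not the full $\eta$ you aim for, and without any claim that only $O(1)$ cubes intervene. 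The outer layer is handled symmetrically with the $B^c$-system, and part (2) follows as you say.
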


\begin{proof}
$(1)$ If $X$ has the homogeneous balls property, then every ball in $X$ as well as its complement in $X$ can be partitioned into dyadic cubes, with uniform constants (see \cite{Christ}). But these cubes themselves have the layer decay property, and it is easy to see that this property transposes to the balls. Let $x \in X$, $r >0$, $B=B(x,r)$, $w \in X$, $0<R\leq 2r$, and fix $\varepsilon >0$. Let us estimate for example the measure of the inner layer $C_{\varepsilon} = \{ x \in B: \rho(x,B^c) \leq \varepsilon \}$. We want to prove that there exists $\eta' >0$ such that $\mu(C_{\varepsilon} \cap B(w,R))) \lesssim \left ( \frac{\varepsilon}{R} \right )^{\eta'} \mu(B(w,R))$. As $B$ constitutes by itself a space of homogeneous type with uniform doubling constant, there exists at every scale a partitioning of $B$ into Christ's dyadic cubes (with uniform constants for these cubes). The idea is to pave $B$ by dyadic cubes of a well chosen generation. Let $N,m,l \in \Z$ be such that $ \delta^{N+1} < r \leq \delta^N$, $\delta^{N+m+1} < \varepsilon \leq \delta^{N+m}$, and $\delta^{N+l+1} < R \leq \delta^{N+l}$. We can assume that $\varepsilon \leq R$, since otherwise the result is trivial, which means $m\geq l$. We look at the cubes of generation $j= N + [\frac{m+l}{2}] \geq N$ because $R\leq 2r$ (otherwise there would be no such cubes): for all $y \in C_{\varepsilon}$, there exists a unique $\beta$ such that $y \in Q_{\beta}^{j}$, and $\rho(y, (Q_{\beta}^{j})^c) \leq \rho(y,B^c) \leq \varepsilon  \leq \delta^{m+N}.$ By the small boundary property \eqref{petitefrontiere} of Christ's dyadic cubes, one gets that for all $\beta$
\[ \mu(C_{\varepsilon} \cap Q_{\beta}^j ) \lesssim \left ( \frac{\delta^{N+m}}{\delta^{j}} \right )^{\eta} \mu(Q_{\beta}^j) \lesssim \left (\frac{\varepsilon}{R} \right )^{\eta/2} \mu(Q_{\beta}^j). \]
On the other hand, if $C_{\varepsilon} \cap Q_{\beta}^j  \cap B(w,R) \neq \varnothing$ and if $y \in C_{\varepsilon} \cap Q_{\beta}^j  \cap B(w,R) $, $x \in Q_{\beta}^j $, then, using the fact that since $m \geq l$, $\delta^j \leq \delta^{N+l} \lesssim R$, we have
\[ \rho(x,w) \leq \rho(x,y) + \rho(y,w) \leq C_1 \delta^j +R \leq CR  \]
for a dimensional constant $C>0$, and then $Q_{\beta}^j \subset B(w,CR)$. Finally, we get
\begin{align*}
\mu(C_{\varepsilon} \cap B(w,R) ) & = \sum_{\beta : C_{\varepsilon} \cap Q_{\beta}^{j} \cap B(w,R) \neq \varnothing } { \mu(C_{\varepsilon} \cap Q_{\beta}^{j} \cap B(w,R) ) }\\
& \lesssim  \sum_{\beta :  C_{\varepsilon} \cap Q_{\beta}^{j} \cap B(w,R) \neq \varnothing} \left ( \frac{\varepsilon}{R} \right )^{\eta/2} \mu(Q_{\beta}^{j})\\
& \lesssim \left ( \frac{\varepsilon}{R} \right )^{\eta /2} \mu(B(w,CR)) \lesssim \left ( \frac{\varepsilon}{R} \right )^{\eta /2} \mu(B(w,R)),\\
\end{align*}
where the last line is obtained using the disjointness of the cubes $Q_{\beta}^j$ and then the doubling property. We can do the same for the outer layer as $B^c$ also constitutes a space of homogeneous type. It proves \eqref{LUOLD}.

\medskip

\noindent $(2)$ It is a direct consequence of $(1)$ and Proposition \ref{LUOLDP}. But it can also be proved directly slightly adapting the proof of Lemma $2.4$ of  \cite{AR}. Indeed, observe that the homogeneous balls property allows to use exact coverings of $B$ and $X \backslash B$ by Christ's dyadic cubes as above, and then everything works out as in the dyadic setting, taking care of those "large" cubes which are not contained in $2B \backslash B$ in a simple manner.
\end{proof}

\bigskip

It is easy to see that a space of homogeneous type does not satisfy the homogeneous balls property $\mathrm{(HB)}$ in general. Let us give a counterexample.

\begin{example}\label{connexe}
Consider the real line, from which one has withdrawn the interval $I_{\varepsilon} = ] 1 - \varepsilon, 1 - \varepsilon^2 [,$ with $\varepsilon$ a fixed small constant. Consider the ball in this set of center $1/2$ and of radius $1/2$. It is easy to see that it has, as a space of homogeneous type, a doubling constant of at least $1/\varepsilon$: inside this ball, consider the ball of center $1-\varepsilon^2$ and radius $\varepsilon - \varepsilon^2$, and its concentric double. Now, set
\[ I_{n,\varepsilon} = ] n - \varepsilon/{2^{n-1}}, n - (\varepsilon/{2^{n-1}})^2 [, \]
and consider the space $X = \R \backslash \cup_{n \geq 1} I_{n,\varepsilon}$, equipped with the Euclidean distance and the Lebesgue measure. It is clear that $X$ does not satisfy the homogeneous balls property since the doubling constants explode. Neither does $X$ satisfy the monotone geodesic property \eqref{monotone} as, to put it roughly, it has holes in it. However, observe that $X$ satisfies both \eqref{LUOLD} and $(\mathrm{HP})$. As a matter of fact, proving $(\mathrm{HP})$ on $X$ is exactly the same as proving it for the real line, and this is trivial (it is the same for \eqref{LUOLD}). In particular, this example shows that neither the monotone geodesic property nor the homogeneous balls property are necessary conditions for $(\mathrm{HP})$.
\end{example}

\bigskip

\begin{remark}We have given two sufficient conditions for \eqref{LUOLD}, one which is purely metric, the monotone geodesic property, while the other is rather a measure property. Let us examine how these two properties are connected. It is clear that $(\mathrm{HB})$ does not imply \eqref{monotone}, as is shown by the trivial example of the real line from which an arbitrary interval has been withdrawn. Now, if we suppose that $X$ satisfies \eqref{monotone}, we have a partial result regarding the homogeneous balls property. As a matter of fact, let $B = B(x,r_0)$, and $y \in B$. We prove that for all $r >0$,
\[ \mu(B(y,2r) \cap B  ) \lesssim \mu(B(y,r) \cap B).  \]
First, if $r \geq 2r_0$, then $B \subset B(y,r) \subset B(y,2r)$ and $B(y,2r) \cap B = B(y,r) \cap B = B.$ Then, if $r \leq 2r_0$ and $\rho(x,y) \leq r_0 /2$, we have $B(y,r/4) \subset (B\cap B(y,r))$ and by the doubling property, $\mu(B(y,2r)) \lesssim \mu(B(y,r/4)) \lesssim \mu(B(y,r)\cap B)$. It only remains to study the case $r \leq 2r_0$ and $r_0 /2 < \rho(x,y) < r_0$. Let $\alpha < \min(\frac{2}{1+C}, \frac{1}{2})$, and $\beta< \alpha/2$.
We have $\rho(x,y) \geq r_0 /2 \geq r \alpha/2$ and by \eqref{monotone}, there exists a point $z$ in $X$ such that
\[ \rho(y,z) \leq C r \frac{\alpha}{2} \quad \mathrm{and} \quad \rho(x,z) \leq \rho(y,x) - r \frac{\alpha}{2}. \]
Consider the ball $B(z,\beta r)$. If $w \in B(z,\beta r),$ then
\[ \rho(w,x) \leq \rho(w,z) + \rho(z,x) \leq \beta r + \rho(y,x) - r \frac{\alpha}{2} \leq r_0. \]
Thus $B(z,\beta r) \subset B.$ Furthermore,
\[ \rho(w,y) \leq \rho(w,z) + \rho(z,y) \leq \beta r + C r \frac{\alpha}{2} < r, \]
since $\alpha < \frac{2}{1+C}$. Thus $B(z,\beta r) \subset B(y,r).$
Finally,
\[ \mu(B(y,2r) \cap B) \lesssim \mu(B(y,r)) \lesssim \mu(B(z,2r)) \lesssim \mu(B(z,\beta r)) \lesssim \mu(B(y,r)\cap B). \]
This means that every ball in $X$ is itself a space of homogeneous type, with uniform constant. However, we cannot obtain the same result for the complement of balls, and $(\mathrm{HB})$ cannot be inferred.
\end{remark}

\bigskip

With the results of these first sections, we have already proved a part of Theorem \ref{diagram}: we have proved the following

\[ \xymatrix{
 & & (\mathrm{HB}) \ar@{=>}@<1ex>[d]_{\,}  \ar@{=>}@<0,5ex>[rd]^{\,}   \\
(\mathrm{M}) \, \, \, \, \not \!\!\!\!\!\!\!  \ar@{=>}@<1,5ex>[r]  & \eqref{LUAD} \ar@{=>}[l]  \ar@{=>}[r]  \ar@{=>}[d]  & \eqref{LUOLD}  \ar@{=>}[d] \ar@{=>}[r] \ar@{=>}@<1ex>[u]|@{/}  & (\mathrm{HP}) \ar@{=>}@<-2ex>_{\,}[lu]|@{/} \\
 &  \eqref{AD}  \ar@{=>}[r] & \eqref{OLD} &
}\]

\medskip

\noindent In particular, all positive implications have been established and we look now for the negative ones.

\bigskip

\section{An example in the complex plane: the curve of Tessera}

\bigskip

To further understand these properties of the homogeneous space, we will consider in this section a curve introduced by Tessera in \cite{Tess}. This curve is given by a stairway-like construction in the complex plane, starting from $0$, and containing for every $k \in \N$ a half-circle of center $0$ and radius $2^k$. More precisely, consider in the complex plane the parametric curve $\gamma(t)$ defined for $t \geq 0$, and constructed as follows with $|\gamma'(t)| =1$ for every $t \geq 0$ : \begin{itemize}
\item[$\bullet$] $\gamma(0) = 0$.
\item[$\bullet$] $\{ \gamma(t) \mid 0 \leq t \leq t_1 \}$ is the segment $[0,1]$.
\item[$\bullet$] $\{   \gamma(t) \mid t_1 \leq t \leq t_2  \}$ is the half-circle of center $0$ and radius $1$ in the half-plane $\{\Im z \geq 0\}$.
\item[$\bullet$] By induction, for $k\geq 1$, $\{ \gamma(t) \mid t_{2k} \leq t \leq t_{2k+1} \}$ is the segment $[2^{k-1},2^k]$ or $[-2^k,-2^{k-1}]$ depending on the parity of $k$, and $\{ \gamma(t) \mid t_{2k+1} \leq t \leq t_{2k+2} \}$ is the half-circle of center $0$ and radius $2^k$ in the half-plane $\{\Im z \geq 0\}$ if $k$ is even, in the half-plane $\{\Im z \leq 0\}$ if $k$ is odd.
\end{itemize}
Set $t_0 = 0$. An easy computation shows that we have for all $k \geq 1$,
\[ t_{2k} = 1 + \pi + (1+2\pi)(2^{k-1} -1)= -\pi + (1+2\pi) 2^{k-1} , \]
\[ t_{2k+1} = 1 + \pi + (1+2\pi)(2^{k-1} -1) + 2^{k-1} = - \pi + (2+2\pi)2^{k-1}.\]
\medskip

\noindent Set $X_T = \{ \gamma(t) \mid t \geq 0 \}$, equipped with the Euclidean distance $\dd$ in $\C$ and the Hausdorff length $\Lambda$. See Figure \ref{FigproofRLDTessera} for a representation of $X_T$. For $x,y \in X_T$, denote by $(x,y)$ the arc in $X_T$ between $x$ and $y$. For $z \in X_T$, and $r>0$, we denote by $B(z,r)$ the open ball of center $z$ and radius $r$ in $X_T$ : $B(z,r) = \{ x \in X_T \mid |z-x| < r \}$. We recall that a bounded set $E \subset \C$ is said to be Ahlfors-David regular (of dimension $1$) when there exists a constant $0< C <+\infty$ such that for every $z \in \C$ and $r >0$, $\frac{1}{C} r \leq \Lambda(E \cap B(x,r)) \leq Cr$ (see \cite{Ahl}, \cite{David-LN}).

\begin{prop}\begin{enumerate}
\item $(X_T,\dd, \Lambda)$ is an Ahlfors-David set of dimension $1$, and thus $(X_T,\dd,\Lambda)$ can be seen as a space of homogeneous type.
\item $X_T$ does not satisfy the annular decay property \eqref{AD} (nor the relative annular decay property \eqref{LUAD}).
\item $X_T$ does not satisfy the homogeneous balls property $(\mathrm{HB})$, nor the monotone geodesic property \eqref{monotone}.
\item $X_T$ satisfies the relative layer decay property \eqref{LUOLD}.
\item $X_T$ satisfies the Hardy property $(\mathrm{HP})$.
\end{enumerate}
\end{prop}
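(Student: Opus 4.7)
The plan is to prove (1)--(5) in order, exploiting the explicit structure of $\gamma$. For (1), since $\gamma$ is arc-length parametrized, the lower bound $\Lambda(X_T \cap B(x, r)) \gtrsim r$ follows from the sub-arc of length $r$ through $x = \gamma(t)$, whose Euclidean diameter is at most $r$. For the upper bound, I would decompose $X_T$ into scale-$k$ pieces (the segments $S_k$ of length $\sim 2^{k-1}$ and the half-circles $C_k$ of radius $2^k$, all lying in $\{|y| \leq 2^k\}$). A ball $B(x, r)$ meets only scales $k$ with $2^k \lesssim \max(|x|, r)$, and at each relevant scale only $O(1)$ pieces contribute total length $O(r)$ (segments by direct measurement, half-circles because a Euclidean disk of radius $r$ cuts any circle of radius $\geq r$ in an arc of length $\lesssim r$).

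For (2), take $z = 0 \in X_T$, $r = 2^k + \varepsilon$ and $s = 2\varepsilon$; the annulus $B(0, r) \setminus B(0, r - s)$ contains the whole half-circle $C_k$ (since every point of $C_k$ has $|y| = 2^k$), so its measure is $\geq \pi 2^k$, whereas $\Lambda(B(0, r)) \lesssim 2^k$ by (1). The $(\mathrm{AD})$ inequality would then force $(\varepsilon / r)^{\eta} \gtrsim 1$, which fails as $\varepsilon \to 0$.

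For (3), to refute $(\mathrm{HB})$ I would take $B = B(1, 1)$: its boundary contains the origin, and near the origin $X_T$ is only the segment $[0, 1]$; since every $t \in (0, 1]$ satisfies $|t - 1| < 1$, we get $B^c \cap X_T \cap B(0, 1) = \{0\}$, of measure zero. However the arc of $C_0$ with $\phi \geq \pi/3$ and the point $-1$ on the segment $[-2, -1]$ (both at Euclidean distance $1$ from the origin) lie in $B^c$, so $\Lambda(B(0, r) \cap B^c) > 0$ for every $r > 1$; thus the doubling constant of $B^c$ at $0$ is infinite, so $B^c$ is not even a space of homogeneous type. To refute \eqref{monotone}, fix $k$ even and take $x = 2^k$, $y = -2^k$ (the endpoints of $C_k$), with $u = 1$. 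Any $z \in X_T$ with $|z - y| \leq C u$ either lies on the segment $[-2^{k+1}, -2^k]$ past $y$, in which case $|z - x| > |x - y|$ and the second inequality fails, or lies on $C_k$ at angle $\theta = \pi - \phi$ for some small $\phi \geq 0$; for such $z$, an explicit computation gives $|z - x| = 2^{k+1}\cos(\phi/2)$ and $|z - y| = 2^{k+1}\sin(\phi/2)$, so $|z - x| \leq 2^{k+1} - u$ forces $\phi \gtrsim 2\sqrt{u/2^k}$, giving $|z - y| \gtrsim 2\sqrt{u \cdot 2^k}$; this exceeds $Cu$ for any fixed $C$ once $k$ is large enough.

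For (4), using $\rho(x, B^c) \geq |x - \partial B|$ (distance in $\C$ to the circle $\partial B$), one contains $B_\varepsilon$ in the intersection of $X_T$ with the Euclidean $\varepsilon$-tube about $\partial B$. Bounding this reduces to a case analysis of how the segments and half-circles of $X_T$ meet $\partial B$: transversal crossings contribute $O(\varepsilon)$ each; tangential crossings contribute $O(\sqrt{\varepsilon r})$; and the degenerate case in which $\partial B$ is concentric with some $C_k$ only contributes near the two endpoints of $C_k$, since the whole arc then lies wholly in $B$ or wholly in $B^c$. Combined with the AD-regularity from (1), this yields $\mu(B_\varepsilon \cap B(w, R)) \lesssim \varepsilon^{\eta} R^{1 - \eta}$ for a suitable $\eta \in (0, 1]$. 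The main obstacle will be this step, specifically controlling the cumulative effect of the half-circles being concentric about the origin, so that a single ball can be tangent or transverse to several simultaneously; the bookkeeping must show that the layer remains subpolynomial in $\varepsilon$ inside any sub-ball $B(w, R)$. Finally, (5) follows immediately from (4) and Proposition \ref{LUOLDP}.
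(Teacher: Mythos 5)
Parts (1)--(3) are essentially sound. Your AD-regularity argument is a workable alternative to the paper's route (the paper instead proves the key comparison \eqref{eth} between arc length and Euclidean distance on $X_T$ and deduces both bounds from it), though your phrase ``at each relevant scale only $O(1)$ pieces contribute total length $O(r)$'' needs the geometric-series observation $\sum_{2^k\leq r}2^k\lesssim r$ to avoid an $O(r\log r)$ upper bound when the ball is centered near the origin. Your counterexample to $(\mathrm{HB})$ (the complement of $B(1,1)$ meets $B(0,1)$ in a single point but meets $B(0,2)$ in a set of positive measure) is different from, and arguably cleaner than, the paper's (which exhibits balls $B(x,3+\varepsilon)$ whose own doubling constant blows up like $1/\varepsilon$); your explicit refutation of \eqref{monotone} at the endpoints of a large half-circle is also fine. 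Part (2) coincides with the paper.

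The genuine gap is in (4), which is the heart of the proposition, and your own text flags it as ``the main obstacle'' without resolving it. The method you propose --- enclose $B_\varepsilon$ in the Euclidean $\varepsilon$-tube around the circle $\partial B$ and classify crossings as transversal/tangential/concentric --- is the wrong tool here, precisely because of your part (2): the $\varepsilon$-tube around $\partial B(0,2^k)$ contains an entire half-circle of length $\pi 2^k$, so its measure does not go to zero with $\varepsilon$, and in the near-concentric regime (center close to, but not at, the origin and radius close to $2^k$) the tube estimate degenerates continuously from $O(\sqrt{\varepsilon r})$ to $\Theta(r)$ with no uniform control; moreover even the clean tangential bound $O(\sqrt{\varepsilon\rho})$ with $\rho$ the radius of the crossed circle is not obviously compatible with the localized estimate \eqref{LUOLD} when $\rho\gg R$. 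The missing idea is to bound the \emph{layer}, not the tube: by \eqref{eth}, any point of a connected component $C_i$ of $B$ lying within Euclidean distance $\varepsilon$ of $B^c$ is within arc length $C_{AD}\,\varepsilon$ of one of the (at most two) endpoints of the arc $C_i$, so each component of $B$ and of $B^c$ contributes at most $O(\varepsilon)$ to $B_\varepsilon$ \emph{regardless of crossing angles or tangency}. One then counts components: consecutive components are separated by Euclidean distances growing like $4^i$, so $B$ has only $O(\log r)$ components and only $O(\log R)$ of them meet $B(w,R)$; since the component sizes also grow geometrically, those of size $\leq\varepsilon$ contribute $O(\varepsilon)$ in total, and the remaining $O(\log(R/\varepsilon))$ components give $\Lambda(B_\varepsilon\cap B(w,R))\lesssim\varepsilon\bigl(1+\log(R/\varepsilon)\bigr)\lesssim(\varepsilon/R)^{\alpha}R$ for any $\alpha<1$. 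Without this component-counting argument (or an equivalent), your ``bookkeeping must show that the layer remains subpolynomial'' is exactly the unproved content of (4), and hence of (5) as well.
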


\medskip

\begin{proof}
$(1)$ Let us first make a preliminary observation: as $|\gamma'(t)| = 1$ for every $t\geq 0$, we have $\Lambda((\gamma(\alpha),\gamma(\beta))) = |\alpha - \beta|$. But there exists a dimensional constant $1<C_{AD}<+\infty$ such that for all $\alpha,\beta>0$
\begin{equation}\label{eth}
\frac{1}{C_{AD}} \,   \dd(\gamma(\alpha),\gamma(\beta)) \leq   \Lambda((\gamma(\alpha),\gamma(\beta))) \leq C_{AD} \,  \dd(\gamma(\alpha),\gamma(\beta)).
\end{equation}
The left inequality is trivial. For the right inequality, let $n,m \in \N$ such that $t_n \leq \alpha < t_{n+1}$, $t_m \leq \beta < t_{m+1}$. If $n=m$, then $\gamma(\alpha)$ and $\gamma(\beta)$ are either on the same half-circle, or on the same segment, and the result is clear. Assume that $|n-m| =1$, then one of these two points is on a segment, and the other on a connected half-circle. Suppose for example that $ t_{2k} \leq \alpha < t_{2k+1}$ and $t_{2k+1} \leq \beta < t_{2k+2}$ , so that $\gamma(\alpha)$ is on the segment $[2^{k-1}, 2^k]$ and $\gamma(\beta)$ on a half-circle of center $0$ and radius $2^k$. Let $\omega = \gamma(t_{2k+1}) = 2^k$. Set $a = \dd (\gamma(\alpha),\omega)$, $c = \dd (\gamma(\beta),\omega)$ and $b = \dd (\gamma(\alpha),\gamma(\beta))$. Applying elementary triangle geometry (see Figure \ref{FigADTessera}), and the fact that $a \leq b$, $c \leq a+b \leq 2b$, write
\begin{align*}
 \Lambda((\gamma(\alpha),\gamma(\beta)))^2 &  \lesssim  (a+c )^2  \lesssim b^2 = \dd(\gamma(\alpha),\gamma(\beta))^2.
\end{align*}

\bigskip

\begin{figure}[htbp]
	\centering
		\includegraphics[scale=0.65]{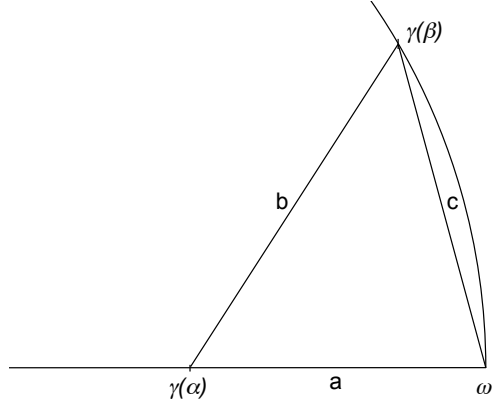}
	\caption{$X_T$ is an Ahlfors-David set of dimension $1$.}
	\label{FigADTessera}
\end{figure}

\bigskip

\noindent Finally, if $|n-m| \geq 2$, assume that for example $n\geq m$, then $\dd (\gamma(\alpha),\gamma(\beta)) \geq |\gamma(\alpha)| - |\gamma(\beta)| \geq 2^{\frac{n}{2} -1} - 2^{\frac{m}{2}}$. On the other hand, $ \Lambda((\gamma(\alpha),\gamma(\beta))) = \alpha - \beta \leq t_{n+1} - t_m \lesssim 2^{n/2}- 2^{m/2}$. The result follows. %Thus $\fonction{\gamma}{(\R^+,\dd \mu)}{(E,\Lambda)}{t}{\gamma(t)}$ satisfies \eqref{eth}. \\
\noindent Now, let us check that $(X_T,\dd,\Lambda)$ is Ahlfors-David. Let $x \in X_T$, $r>0$, and set $B=B(x,r)$. As $X_T$ is connected and not bounded, the connected component of $x$ in $B$, denoted by $C(x)$, is at distance zero from the complement of $B$. Thus, there exists $y \in C(x)$ such that $\dd (x,y) \geq r/2$. But then, by \eqref{eth}, we have $\Lambda (B) \geq \Lambda((x,y)) \geq C_{AD}^{-1} \dd (x,y) \geq C_{AD}^{-1} \, (r/2) $. Moreover, set $t_0 = \inf \{ t\geq 0 \mid \gamma(t) \in B \} \geq 0$ and $t_1 = \sup \{ t\geq 0 \mid \gamma(t) \in B \} < +\infty$ because $X_T$ is unbounded. Then $\Lambda (B) \leq \Lambda ((\gamma(t_0), \gamma(t_1))) \leq C_{AD} \dd (\gamma(t_0), \gamma(t_1)) \leq C_{AD} \, 2r  $. This proves that $(X_T,\dd,\Lambda)$ is an Ahlfors-David set of dimension $1$.

\bigskip

\noindent $(2)$ As a consequence of $(1)$, observe that for any $k \geq 1$, $\Lambda(B(0,2^k)) \eqsim 2^k$. However, for every $\varepsilon >0$, we have
\[ \Lambda (B(0,2^k+ \varepsilon) \backslash B(0,2^k)) \geq \pi 2^k. \]
If $X_T$ had the annular decay property, the measure of this set would be going to $0$ with $\varepsilon$. We thus have a contradiction. Hence, $X_T$ cannot satisfy \eqref{AD}, nor \eqref{LUAD}.

\bigskip

\noindent $(3)$ It is obvious that $X_T$ does not satisfy the monotone geodesic property (pick any two points on different half circles in $X_T$). For the homogeneous balls property, fix $\varepsilon >0$ and consider the ball in $X_T$ of center $x=(0,4)$ and radius $r=3+ \varepsilon$. Let $y=(0,1)$ and $\rho = 3$. The set $B(x,r) \cap B(y,\rho)$ has only one connected component, containing $y$, and its length is comparable to $\varepsilon$. On the other hand, the set $B(x,r) \cap B(y,2 \rho)$ has two connected components, one of which containing $x$ and of length comparable to $1$. Thus, the doubling constant of the ball $B(x,r)$ seen as a space of homogeneous type exceeds $C/\varepsilon$. It follows that $X_T$ cannot satisfy $(\mathrm{HB})$, as the doubling constants of the balls cannot be uniform.

\bigskip

\noindent $(4)$ Let $0<\alpha<1 $ and let $c_1 >0$ be such that $\forall t < c_1, \ \ t^{1-\alpha} |\ln t| \leq 1 $. We first prove that $X_T$ satisfies the layer decay property \eqref{OLD}. Fix $z \in X_T, r>0, 0< \varepsilon \leq r$, set $B=B(z,r)$. Set as before $B_{\varepsilon} =  \{ x \in B  | \dd (x,B^c) \leq \varepsilon  \} \cup \{ y \in B^c  | \dd (y,B) \leq \varepsilon  \}$ the union of the inner and outer layers. Let $\theta = \varepsilon /r$. We show that there exists a dimensional constant $C< +\infty$ such that
\begin{equation} \label{resultconnec}
 \Lambda(B_{\varepsilon}) \leq C \theta^{\alpha} \Lambda(B).
\end{equation}
Observe that if $c_1 \leq \theta \leq 1$, the result is trivial:
\[ \Lambda(B_{\varepsilon}) \leq \Lambda(B) \leq \left ( \frac{1}{c_1} \right )^{\alpha} \theta^{\alpha} \Lambda(B). \]
So assume now that $\theta < c_1$. Observe that the points in $B_{\varepsilon}$ are elements of $X_T$ at distance less or equal to $\varepsilon$ from $\overline{B} \backslash B$, where $\overline{B}$ is the closure of $B$ in $X_T$: $\gamma(t) \in \overline{B} \backslash B$ if $\gamma(t) \in B^c$ and either for every $s >0$ small enough $\gamma(t+s)\in B$ or for every $s>0$ small enough $\gamma(t-s)\in B$. If $r \leq 1$, remark that $B$ and $B^c$ are connected sets, so that there are at most two points in $\overline{B} \backslash B$. But since $X_T$ is Ahlfors-David, we get $\Lambda (B_{\varepsilon}) \lesssim 2  \varepsilon$, and also $\Lambda(B) \eqsim 1$. $\eqref{resultconnec}$ follows.

\noindent We suppose now $r>1$. Assume first that $0 \notin B$, that is $|z| >r$. Denote by $C_i$, respectively $C'_j$, $\,  0 \leq i \leq p$, $0\leq j \leq p+1$, the different connected components of $B$, respectively $B^c$, starting from the one closest to the origin. Because of \eqref{eth}, it is easy to see that each $C_i$, $C'_j$ will roughly contribute to $\varepsilon$ towards $\Lambda(B_{\varepsilon})$. More precisely, set $B^i_{\varepsilon} = B_{\varepsilon} \cap (C_i \cup C'_i)$ for $0 \leq i \leq p$, and $B^{p+1}_{\varepsilon} = B_{\varepsilon} \cap C'_{p+1}$. Then, it follows from \eqref{eth} that we have for every $0\leq i \leq p+1$,
\begin{equation} \label{componentbound}
\Lambda(B^i_{\varepsilon}) \leq 4C_{AD} \, \varepsilon \lesssim  \varepsilon.
\end{equation}
Now, the idea is to estimate the number of components $C_i$, and to take care of the fact that some of them can contribute to $\Lambda(B_{\varepsilon})$ for less than $\varepsilon$, as their length can be less than that if $r$ and $\varepsilon$ are large enough. It is easy to see that $p$ can be roughly bounded by $\ln r$. Indeed, let $k_0\geq 0$ be such that $2^{k_0} \leq \dist(C_0, 0) <  2^{k_0 +1}$ (remember that we have assumed $|z| >r$), and observe (see Figure \ref{FigproofRLDTessera}) that we have, for $0 \leq i \leq p-1$, $\dd (C_i,C_{i+1}) \geq 2^{k_0 +2i} \geq 2^{2i}$. Consequently, we must have
\[ \sum_{i=0}^{p-1} 2^{2i} \leq 2r \Rightarrow 4^{p} \lesssim r \Rightarrow p \lesssim \ln r.\]
On the other hand, observe that for every $0 \leq i \leq p$, $(C_i \cup C'_i) \subset B(0,2^{k_0 +2i+2})$. Since $X_T$ is Ahlfors-David, it follows that
\begin{equation}  \label{taillecomponent}
\Lambda(C_i \cup C'_i) \lesssim 2^{k_0 + 2i +2} \lesssim 4^{k_0 +i}.
\end{equation}
Applying \eqref{componentbound}, we get $\Lambda(B^i_{\varepsilon}) \lesssim \min (\varepsilon, 4^{k_0+i})$. Finally, we obtain
\[ \Lambda(B_{\varepsilon}) = \sum_{i=0}^{p+1}{\Lambda(B^i_{\varepsilon})} \lesssim \sum_{i \geq 0 :  4^{k_0+i} \leq \varepsilon} 4^{k_0+i} + \varepsilon \, \card \{ 0 \leq i \leq p+1  \mid \varepsilon<  4^{k_0+i} \}. \]
But $\card \{ 0 \leq i \leq p+1  \mid \varepsilon<  4^{k_0+i} \} \leq \card \{ 0 \leq i \leq p+1  \mid \varepsilon<  4^{i} \}$. Remark that if $4^{i} > \varepsilon$, then $i \geq \frac{\ln\varepsilon}{\ln 4}$. The cardinal intervening in the second term is thus bounded by $C (\ln r - \ln \varepsilon)$ where $C$ is an absolute constant. Consequently, we have
\[ \Lambda(B_{\varepsilon}) \lesssim \varepsilon + \varepsilon \ln \left ( \frac{r}{\varepsilon}  \right ) \lesssim \varepsilon (1 - \ln \theta) \lesssim \theta^{\alpha} r, \]
because $\varepsilon (1 - \ln \theta) \theta^{-\alpha} r^{-1} = \theta^{1-\alpha} - \theta^{1-\alpha} \ln \theta \leq 2$ as $\theta < c_1$. But since $X_T$ is Ahlfors-David, we have $\Lambda(B) \eqsim r$ and \eqref{resultconnec} follows. \\
\noindent It remains to consider the case when $0 \in B$. But the same argument still works, only the notations have to be slightly modified because, this time, the origin belongs to the connected component $C_0$ of $B$. Thus, there is in this case the same number of components $C_i$ and $C'_i$, and it is the distance $\dd (C'_0,0)$ that plays a role in the argument instead of $\dd (C_0,0)$. Apart from this, the argument is mostly unchanged, so we do not elaborate on it here.

\begin{figure}[htbp!]
	\centering
		\includegraphics[scale=0.9]{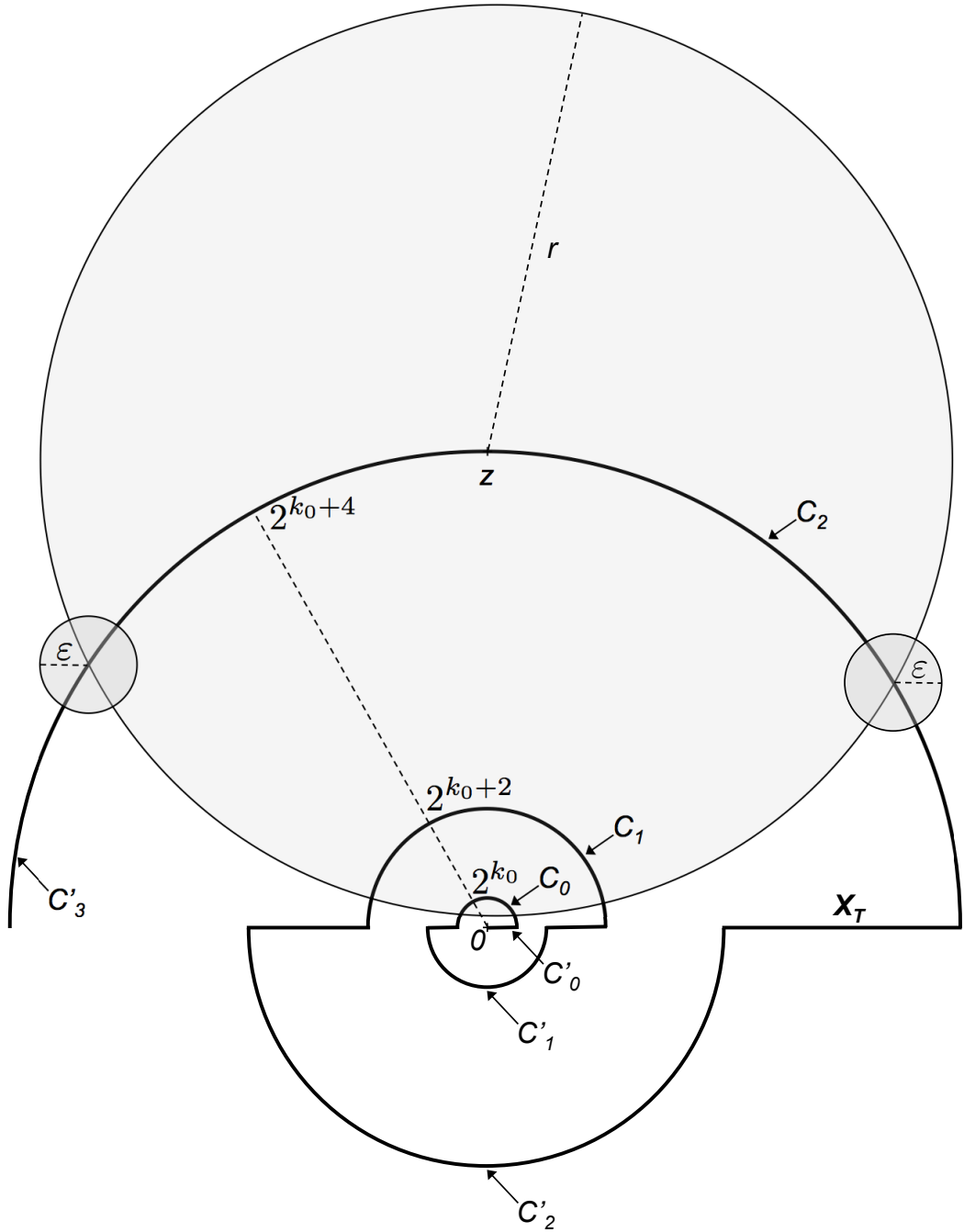}
	\caption{Layer decay property \eqref{OLD} in the space $X_T$.}
	\label{FigproofRLDTessera}
\end{figure}

It remains to prove \eqref{LUOLD}. Let $B= B(z,r)$ be a ball in $X_T$ as before. Let $0<R\leq 2r$, $w \in X_T$. We prove that
\begin{equation}\label{proofRLDTessera}
\Lambda (B_{\varepsilon} \cap B(w,R)) \lesssim \left (\frac{\varepsilon}{R} \right )^{\alpha} \Lambda(B(w,R)),
\end{equation}
Once again, when $\varepsilon /R \geq c_1$, the result is trivial, so we can assume that $\varepsilon /R <c_1$. Now, observe that we can apply exactly the same argument as above. The only difference is that instead of estimating the total number $p$ of connected components $C_i$ of $B$, we now have to estimate the number of these connected components that intersect $B(w,R)$. But by the same argument as before, this number is bounded by $(\ln R)$ as soon as $R>1$ (and the result is trivial when $R \leq 1$). Going through with the argument, this provides the bound
\[ \Lambda(B_{\varepsilon} \cap B(w,R)) \lesssim \left ( \frac{\varepsilon}{R} \right )^{\alpha} R. \]
But since $X_T$ is Ahlfors-David, we have $\Lambda (B(w,R)) \eqsim R$ and \eqref{proofRLDTessera} follows.

\bigskip

\noindent $(5)$ Applying Proposition \ref{LUOLDP}, $(5)$ is a direct consequence of $(4)$, but to better understand this example, we will give a direct proof here. Fix $z \in X_T$, $r>0$, $1<\nu<+\infty$, set $B=B(z,r)$, and let $f \in L^{\nu}(B)$, $f$ supported on $B$, $g\in L^{\nu'}(2B\backslash B)$, $g$ supported on $2B\backslash B$. Remark that because of Proposition \ref{Hardy1couple}, we could limit ourselves to the case when $\nu = \nu'= 2$, but we will keep on working with undefined exponents to show that they do not play any part in our argument and that the latter does not rely on any specific geometry brought by $L^2$ integrability. Assume as before that for example $0 \notin B$, the argument is unchanged when $0 \in B$, only the notations have to be adapted. We adopt the same notations as in $(4)$: denote by $C_i$, respectively $C'_j$, $\,  0 \leq i \leq p$, $0\leq j \leq p+1$, the different connected components of $B$, respectively $B^c$, starting from the one closest to the origin. Set $I_{2i} = \{t \geq 0 \mid \gamma(t) \in C_{i-1} \}$ for $1 \leq i \leq p+1$, $I_{2j+1} = \{ t \geq 0 \mid  \gamma(t) \in C'_{j} \cap (2B \backslash B) \}$ for $0 \leq j \leq p+1$. We want to estimate the following quantity
\[ H(f,g) = \int_B \int_{2B \backslash B} \frac{f(x)g(y)}{\Lambda(B(x,\dd (x,y)))} \dd \Lambda(x) \dd \Lambda(y).   \]
As $X_T$ is Ahlfors-David, and applying \eqref{eth}, we have
\begin{equation}  \label{hfgint}
H(f,g) \eqsim \int_B \int_{2B \backslash B} \frac{f(x)g(y)}{\dd (x,y)} \dd \Lambda(x) \dd \Lambda(y) \eqsim \sum_{i=0}^{p} \sum_{j=-1}^{p} \int_{I_{2i}} \int_{I_{2j+1}} \frac{f(\gamma(t))g(\gamma(s))}{|s-t|} \dd t \dd s,
\end{equation}
because $|\gamma'(t)|=1$ for every $t \geq 0$. Set $\widetilde{f} = f \circ \gamma $, $\widetilde{g} = g \circ \gamma$, $\widetilde{f} \in L^{\nu}$, supported on $\bigcup_{i=1}^{p+1} {I_{2i}}$, $\widetilde{g} \in L^{\nu'}$, supported on $\bigcup_{j=0}^{p+1}{I_{2j+1}}$, with $\|\widetilde{f}\|_{\nu} = \|f\|_{\nu}$, $\|\widetilde{g}\|_{\nu'} = \|g\|_{\nu'}$. Let again $k_0\geq 0$ be such that $2^{k_0} \leq \dist(C_0, 0) <  2^{k_0 +1}$ (remember that we have assumed $|z| >r$). Because of the fact that $|\gamma'(t)|=1$ for every $t\geq 0$, observe that we have, by \eqref{taillecomponent}, for every $1 \leq i \leq p+1$, $0 \leq j \leq p$,
\[ |I_{2i}| = \Lambda(C_{i-1}) \leq \Lambda(C_{i-1} \cup C'_{i-1}) \lesssim 2^{k_0}4^i, \quad \quad  |I_{2j+1}| \leq \Lambda(C'_j) \leq \Lambda(C_j \cup C'_j)  \lesssim 2^{k_0}4^j .    \]
For $j=p+1$, remark that as for $0 \leq i \leq p$, $C_i \subset B(0,2^{k_0 +2i+2} )$, we have $B \subset B(0, 2^{k_0+2p+2})$. It follows that $2B\subset B(0, 3 \times 2^{k_0+2p+2})$. As a matter of fact, if $\dd (z,x) < 2r$, then $|x| \leq \dd (x,z) + |z| < 2r + |z| < 3 |z| < 3 \times 2^{k_0+2p+2} $, because we have assumed $|z|>r$. Thus, since $X_T$ is Ahlfors-David, it follows that
\[ |I_{2p+3}| \leq \Lambda(2B \backslash B) \leq \Lambda(B(0, 3 \times 2^{k_0+2p+2})) \lesssim 2^{k_0} 4^p. \]
Finally, it is easy to see that if $j \notin \{i-1,i\}$, we have $\dist(I_{2i},I_{2j+1}) \gtrsim 2^{k_0}|4^i - 4^j|$. Now, set
\[ f_i = \left ( \int_{I_{2i}}{|\widetilde{f}(t)|^{\nu} \dd t } \right )^{1/{\nu}} \in {\ell}^{\nu}(\{ 1,...,p+1 \} ), \quad \mathrm{with} \quad \|(f_i)_i\|_{\ell^{\nu}} = \|\widetilde{f}\|_{\nu}= \|f\|_{\nu},  \]
\[ g_j = \left ( \int_{I_{2j+1}}{|\widetilde{g}(s)|^{\nu'} \dd s } \right )^{1/{\nu'}} \in \ell^{\nu'}(\{0,...,p+1\}),  \quad \mathrm{with} \quad \|(g_j)_j\|_{\ell^{\nu'}} = \|\widetilde{g}\|_{\nu'} =\|g\|_{\nu'}. \]
Split the sum in \eqref{hfgint} for the neighboring $I_k$ and the ones that are far from one \mbox{another:} we have
\begin{align*}
H(f,g) & \lesssim \sum_{i=1}^{p+1} \sum_{j=i-1}^{i} \int_{I_{2i}} \int_{I_{2j+1}} \frac{|\widetilde{f}(t)| | \widetilde{g}(s)|}{|s-t|} \dd t \dd s + \sum_{i=1}^{p+1} \sum_{j\notin \{i-1,i\}} \int_{I_{2i}} \int_{I_{2j+1}} \frac{|\widetilde{f}(t)| | \widetilde{g}(s)|}{|s-t|} \dd t \dd s\\
& = H_1(f,g) + H_2(f,g).
\end{align*}
For $H_1$, apply \eqref{3etoile} on $\R$ and then the Cauchy-Schwarz inequality to get
\begin{align*}
H_1(f,g) & \lesssim \sum_{i=1}^{p+1} \sum_{j \in \{i-1,i\}} {f_i g_j} \lesssim \left ( \sum_{i=1}^{p+1} \sum_{j \in \{i-1,i\}} {f_i^{\nu}} \right )^{1/{\nu}} \left ( \sum_{j=0}^{p+1} \sum_{1 \leq i \leq p+1 \atop i \in \{j+1,j\}} {g_j^{\nu'}} \right )^{1/{\nu'}} \\
& \lesssim \|f\|_{\nu} \|g\|_{\nu'}.
\end{align*}
To estimate $H_2$, write
\begin{align*}
H_2(f,g) & \lesssim \sum_{i=1}^{p+1} \sum_{j\notin \{i-1,i\}} \frac{1}{\dist(I_{2i},I_{2j+1})} f_i |I_{2i}|^{\frac{1}{\nu'}} g_j |I_{2j+1}|^{\frac{1}{\nu}} \\
& \lesssim \sum_{i=1}^{p+1} \sum_{j\notin \{i-1,i\}}  \frac{4^{\frac{i}{\nu'}} 4^{\frac{j}{\nu}}}{|4^i - 4^j|} f_i g_j
\end{align*}
By symmetry, we will be done if we can bound for example the sum for $j>i$. But if $j>i$ observe that $|4^i - 4^j| = 4^i(4^{j-i} -1) \gtrsim 4^j$. Applying the Cauchy-Schwarz inequality, write then
\begin{align*}
\sum_{1\leq i \leq p+1, \, j>i}  \frac{4^{\frac{i}{\nu'}} 4^{\frac{j}{\nu}}}{|4^i - 4^j|} f_i g_j & \lesssim \sum_{1\leq i \leq p+1,\, j>i}  f_i g_j 4^{-\frac{j-i}{\nu'}}  \\
& \lesssim \left ( \sum_{1\leq i \leq p+1, \, j>i}  f_i ^{\nu} 4^{-\frac{j-i}{\nu'}} \right )^{\frac{1}{\nu}} \left ( \sum_{1\leq i \leq p+1,\, j>i}  g_j ^{\nu'}4^{-\frac{j-i}{\nu'}} \right )^{\frac{1}{\nu'}} \\
& \lesssim  \left ( \sum_{1\leq i \leq p+1} f_i ^{\nu} \sum_{j>i} {4^{-\frac{j-i}{\nu'}}} \right )^{\frac{1}{\nu}} \left ( \sum_{2 \leq j \leq p+1} g_j ^{\nu'} 4^{-\frac{j}{\nu'}} \sum_{1 \leq i <j}{4^{\frac{i}{\nu'}}} \right )^{\frac{1}{\nu'}} \\
& \lesssim \left ( \sum_{1\leq i \leq p+1} f_i ^{\nu} \right )^{\frac{1}{\nu}} \left ( \sum_{2 \leq j \leq p+1} g_j ^{\nu'}  \right )^{\frac{1}{\nu'}} \lesssim   \|f\|_{\nu} \|g\|_{\nu'}.
\end{align*}
Finally, one gets $H(f,g) \lesssim  \|f\|_{\nu} \|g\|_{\nu'}$, and thus $X_T$ satisfies the Hardy property $(\mathrm{HP})$.

\end{proof}

\smallskip

Consequently, the space $X_T$ satisfies \eqref{LUOLD} and $(\mathrm{HP})$, but neither $\mathrm{(HB)}$, $\mathrm{(M)}$ nor \eqref{LUAD}. As we have already pointed out before, there is a tangible difference between the definitions of layer decay and annular decay properties. Thus, it is not surprising to find out that these properties are not equivalent. We now have a counterexample for most of the false implications in Theorem \ref{diagram}. It only remains to build a space satisfying \eqref{OLD} but neither \eqref{LUOLD} nor $(\mathrm{HP})$ to complete the proof.

\medskip

\begin{remark}
Let us comment this construction. Observe that the space $X_T$ is a spiral which curls up around the origin in a scale invariant way. Observe as well that we rely on this scale invariance in the above proof. However, one can get another example, basically just by truncating the space $X_T$, of a space satisfying \eqref{LUOLD} but not \eqref{LUAD}. Indeed, let $X'_T = \{\gamma(t) \mid 0\leq t \leq t_8\} \cup [8,+\infty[$ where $[8,+\infty[$ denotes the half-line on the real axis. Then the argument in $(2)$ still holds and $X'_T$ does not satisfy \eqref{LUAD}. On the other hand, it is immediate to see that for any given ball $B$ of $X'_T$, $\card (\overline{B} \backslash B) \leq 6$, and \eqref{LUOLD} follows easily. This shows that the scale invariance is not necessary.
\end{remark}

\bigskip

%Summing up, this example allows us to complete the previous diagram of implications: we now have proven the following
%
%\[ \xymatrix{
% & & (\mathrm{HB}) \ar@{=>}@<1ex>[d]_{\,}  \ar@{=>}@<1,5ex>[rd]^{\,}   \\
%(\mathrm{M}) \, \, \,  \, \not \!\!\!\!\!\!\!  \ar@{=>}@<1,5ex>[r]  & \eqref{LUAD} \ar@{=>}[l] \, \, \, \, \not \!\!\!\!\!\!\!  \ar@{=>}@<1,5ex>[r]  \ar@{=>}[d]  & \eqref{LUOLD}  \ar@{=>}[d]  \ar@{=>}[l]  \ar@{=>}[r]  \ar@{=>}@<1ex>[u]|@{/} \ar@{=>}@<-0,5ex>_{\,}[dl]|@{/}  & (\mathrm{HP}) \ar@{=>}@<-3ex>_{\,}[lu]|@{/}  \\
% &  \eqref{AD}   \, \, \,\,\, \,\, \not \!\!\!\!\!\!\!\!\! \ar@{=>}@<1,25ex>[r] & \eqref{OLD} \ar@{=>}[l]&
%}\]
%
%\bigskip

\section{Counterexamples and end of the proof of Theorem \ref{diagram}}

\bigskip
\medskip

We now present some variations of the space $X_T$ in order to provide a space where the Hardy property cannot be satisfied. It was originally inspired from the curve of Tessera, but is actually in the end only marginally connected to it. Still in the complex plane, consider the space formed by the union of the segment $[0,1]$ on the real axis, the half circle of center $0$ and radius $1$ in the half-plane $\Im z \geq 0$, and the half-line $]-\infty, -1]$ on the real axis. Parameter this curve so that it is traveled at constant speed. This is someway a truncation of the space $X_T$. Now introduce a small perturbation $\epsilon$ of the half-circle: for $t \in [0,\pi]$, set $\rho(t) = 1 + \epsilon(t)$, where $\epsilon$ is a rapidly oscillating function in the neighborhood of the origin, and set
\[ X_{\epsilon} =   \{ \gamma(t)  \mid    t \geq -1 \}, \quad \mathrm{with} \quad \gamma(t) = \begin{cases} (t+1,0) \quad \mathrm{for} \quad -1 \leq t \leq 0, \\  (\rho(t)\cos t, \rho(t) \sin t) \quad \mathrm{for} \quad 0<t < \pi , \\ (\pi -t,0) \quad \mathrm{for} \quad t \geq \pi .    \end{cases}  \]
Choose an oscillating function $\epsilon$, ensuring that $X_{\epsilon}$ keeps finite arclength: $\epsilon(t) = a(t) \sin( b(t))$ for functions $a,b$ appropriately chosen.

\subsection{Exponential oscillation}

Let
\[ \epsilon_1(t) = e^{-\frac{1}{t^2}} \sin (\pi e^{\frac{1}{t} - \frac{1}{\pi}}), \]
and set $X_1 = X_{\epsilon_1}$. See Figure \ref{FigX1} for a representation of the space $X_1$.

\bigskip

\begin{figure}[htbp]
	\centering
		\includegraphics[scale=0.8]{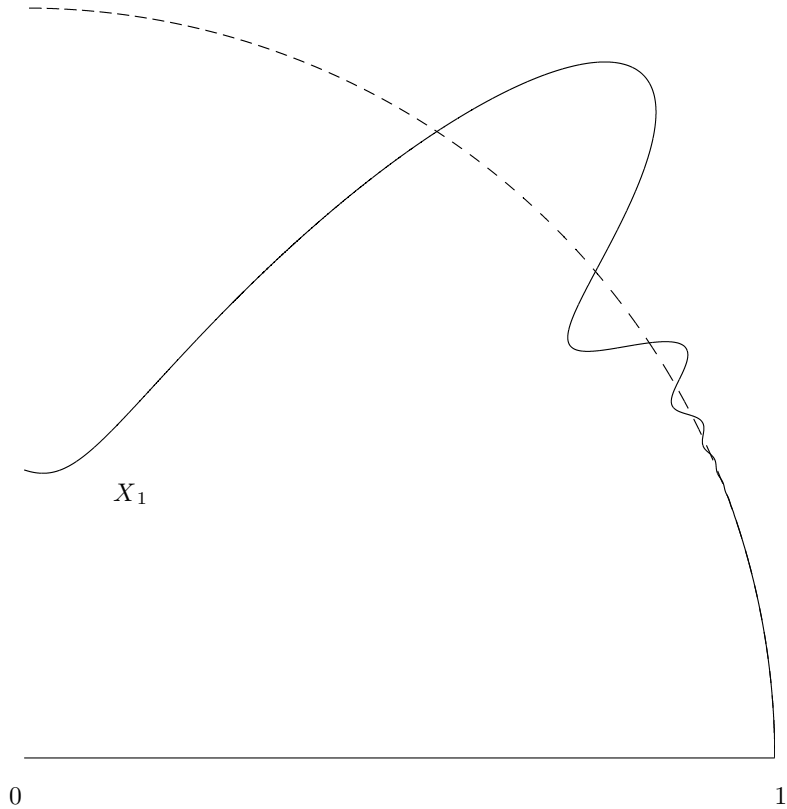}
	\caption{A representation of the space $X_1$.}
	\label{FigX1}
\end{figure}

\bigskip

\noindent Remark that choosing $\frac{1}{t^{\alpha}}$ instead of $\frac{1}{t}$ would not change anything in the following. Observe that with this choice of $\epsilon$, $\gamma$ is $C^1$ and $|\gamma'|$ is uniformly bounded below and above, which obviously makes of $(X_1, \dd , \Lambda)$ an Ahlfors-David space. In the following of this section, $B(z,r)$ will always denote the ball in $\R^2$ of center $z$ and radius $r$. For a ball $B$ in $\R^2$ centered at a point of $X_1 \subset \R^2$, we will denote by $B^{X_1}$ the corresponding ball in the space ${X_1}$. Let $B^0 = B(0,1)$.

\bigskip

\begin{prop} \begin{enumerate}
\item ${X_1}$ does not satisfy the layer decay property \eqref{OLD}.
\item $X_1$ does not satisfy the Hardy property $(\mathrm{HP})$.
\end{enumerate}
\end{prop}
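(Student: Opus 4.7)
The essential feature of $X_1$ is the mismatch, in $\epsilon_1(t) = e^{-1/t^2}\sin(\pi e^{1/t - 1/\pi})$, between amplitude and frequency near $t=0$: the distance between consecutive zeros of $\epsilon_1$ is of order $t^2 e^{-1/t}$, while the amplitude is of order $e^{-1/t^2}$, exponentially smaller than any such half-period. Throughout, I work with the specific ball $B^0 = B(0,1) \cap X_1$, using that $X_1$ is Ahlfors-David regular with $|\gamma'|$ uniformly bounded above and below, so that $\lambda(x,y) \eqsim |x-y|$, $\dd \Lambda \eqsim \dd t$ along $\gamma$, and near the origin $|\gamma(t)-\gamma(s)|^2 \eqsim (t-s)^2 + (\epsilon_1(t)-\epsilon_1(s))^2$.

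For part $(1)$, the plan is to produce a sub-polynomial lower bound on $\Lambda((B^0)_\varepsilon)$. Choose $t_\varepsilon \asymp 1/\ln(1/\varepsilon)$ so that for every $t \in (0, t_\varepsilon)$ the distance to the nearest zero of $\epsilon_1$ is at most $\varepsilon$. At such $t$, both $|\epsilon_1(t)|$ and the value of $\epsilon_1$ at that nearest zero $t'$ are $\leq e^{-1/t^2} \ll \varepsilon$, so $|\gamma(t)-\gamma(t')| \lesssim \varepsilon$; since $\gamma(t') \in \partial B^0 \cap X_1 \subset (B^{X_1})^c$, we get $\gamma(t) \in (B^0)_\varepsilon$. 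Roughly half of $t \in (0, t_\varepsilon)$ satisfies $\epsilon_1(t) < 0$ (so $\gamma(t) \in B^{X_1}$), forcing $\Lambda((B^0)_\varepsilon) \gtrsim t_\varepsilon \asymp 1/\ln(1/\varepsilon)$, which violates the polynomial decay required by \eqref{OLD}.

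For part $(2)$, I will exhibit explicit $f, g$ defeating $(\mathrm{HP})$ at $\nu = \nu' = 2$. Fix a small $t_0 > 0$ and set $A^{\pm} = \{\gamma(t) : t \in (0, t_0),\ \pm\epsilon_1(t) > 0\}$; define $f = 1_{A^-}$, supported in $B^{X_1}$, and $g = 1_{A^+}$, supported in $2B^{X_1} \setminus B^{X_1}$ since $1 < |\gamma(t)| < 2$ on $A^+$. Both norms $\|f\|_2, \|g\|_2$ are comparable to $\sqrt{t_0}$. The key geometric observation is that whenever $t \in A^-$ and $s \in A^+$, a zero of $\epsilon_1$ lies between them, so $|t - s|$ exceeds one half-period at $\min(t,s)$, of order $\min(t,s)^2 e^{-1/\min(t,s)}$, which is exponentially larger than $|\epsilon_1(t) - \epsilon_1(s)|$; thus $|\gamma(t) - \gamma(s)| \eqsim |t - s|$ and
\[ H(f,g) \;\gtrsim\; \int_{A^+}\int_{A^-}\frac{\dd t\,\dd s}{|t-s|}. \]

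The remaining step is to show this double integral is $+\infty$. For fixed small $s \in A^+$, the $j$-th nearest negative half-oscillation of $\epsilon_1$ sits at $t$-distance $\asymp j \cdot s^2 e^{-1/s}$ and has length $\asymp s^2 e^{-1/s}$, so it contributes $\asymp 1/j$ to $\int_{A^-} \dd t/|t-s|$; summing over the $\asymp e^{1/s}/s$ half-oscillations lying within $t$-distance $\asymp s$ of $s$ yields $\int_{A^-} \dd t/|t-s| \gtrsim \ln(e^{1/s}/s) \asymp 1/s$. Since the ever-denser oscillations of $\epsilon_1$ ensure $A^+$ has density bounded below by a positive constant in every interval $(a, 2a) \subset (0, t_0)$, the outer integral $\int_{A^+} \dd s / s$ dominates a divergent geometric sum over dyadic scales. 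Hence $H(f,g) = +\infty$ while $\|f\|_2 \|g\|_2 < +\infty$, so $(\mathrm{HP})$ fails. The main technical obstacle is the lower bound $\int_{A^-} \dd t/|t-s| \gtrsim 1/s$, obtained by accumulating contributions from many half-oscillations rather than just the nearest one.
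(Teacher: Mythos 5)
Your proof of part $(1)$ is essentially the paper's own argument: both identify the zeros $t_k = (\ln k + 1/\pi)^{-1}$ of $\epsilon_1$ as the points of $\overline{B^0}\setminus B^0$, observe that the gaps $t_k - t_{k+1}$ fall below $\varepsilon$ once $t \lesssim 1/\ln(1/\varepsilon)$, and conclude $\Lambda(B^{0,X_1}_{\varepsilon}) \gtrsim 1/\ln(1/\varepsilon)$, which is incompatible with the polynomial decay in \eqref{OLD}.

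Part $(2)$ is correct but takes a genuinely different route. The paper discretizes $H(f,g)$ into a bilinear form $\sum_{k,p} f_k g_p K(k,p)$ with the logarithmic Hilbert-type kernel $K(k,p) \eqsim \frac{(\ln p)^{1-2/\nu}(\ln k)^{1-2/\nu'}}{k^{1/\nu'}p^{1/\nu}|\ln(p/k)|}$ and then defeats it with carefully tuned sequences $f_k = k^{-1/\nu}(\ln k)^{-1/\nu-\eta}$, $g_p = p^{-1/\nu'}(\ln p)^{-1/\nu'-\eta}$; the divergence there is extracted from the far-off-diagonal range $p \gg k$. You instead take the two bounded indicator functions $1_{A^{\pm}}$ of the inner and outer oscillations and show $\int_{A^+}\int_{A^-}\frac{\dd t\,\dd s}{|t-s|}=+\infty$ by a scale-counting argument: the inner integral picks up a contribution $\eqsim 1$ from each of the $\eqsim 1/s$ dyadic scales between the local half-period $s^2e^{-1/s}$ and $s$, and $\int_{A^+} s^{-1}\,\dd s$ then diverges because $A^+$ has positive density in every dyadic block. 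This is more elementary, makes the geometric mechanism (exponentially many oscillation scales inside each annulus $|t-s|\eqsim 2^{-i}$) completely transparent, and — since your $f,g$ are bounded with bounded support, hence in every $L^{\nu}$ — kills \eqref{3etoile} for all exponents at once with a single pair of functions; in the paper's discrete framework your choice amounts to $f_k = |I_{2k}|^{1/\nu}$, $g_p=|I_{2p+1}|^{1/\nu'}$, whose divergence comes from the near-diagonal range $k<p<2k$ rather than the tail. The paper's computation, in exchange, exposes the precise discrete kernel and shows how narrowly boundedness fails. One small imprecision: your clause asserting that $t\in A^-$, $s\in A^+$ forces $|t-s|$ to exceed a half-period is false for points straddling a common zero; but the conclusion you need, $|\gamma(t)-\gamma(s)|\eqsim|t-s|$, holds anyway from your own opening observation $|\gamma(t)-\gamma(s)|^2 \eqsim (t-s)^2 + (\epsilon_1(t)-\epsilon_1(s))^2$ together with the uniform Lipschitz bound on $\epsilon_1$, so nothing is lost.
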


\bigskip

\begin{proof}
We prove that neither of these properties are satisfied for the ball $B^{0,{X_1}}$.

\bigskip

\noindent $(1)$ Let $\varepsilon >0$. Remark that if $\gamma(t) \in \overline{B^{0,{X_1}}} \backslash B^{0,{X_1}} $, then $\epsilon(t)=0$. Denote by $t_k$ these points with $(t_k)_{k\in \N}$ a sequence decreasing to zero, and $t_0 = \pi$. Then we have
\[ B^{0,{X_1}}_{\varepsilon} = \{ \gamma(t)  \mid \exists k \in \N \ \  \dd(\gamma(t),\gamma(t_k)) < \varepsilon  \}. \]
Because of the uniform boundedness of $| \gamma' |$ above and below, observe that we have
\[  \Lambda(B^{0,{X_1}}_{\varepsilon}) \eqsim  | \{ t \in [0,\pi] \mid \exists k \ \  |t_k - t|< \varepsilon \} |, \]
On the other hand, for $k \geq 1$, we have
\[ \pi e^{-\frac{1}{\pi}} e^{\frac{1}{t_k}} = k \pi \Rightarrow t_k = \frac{1}{\ln k + 1/{\pi}} \Rightarrow t_k - t_{k+1}  \lesssim \frac{1}{k}.  \]
Thus, if $k \geq \frac{1}{\varepsilon}$, and $t \leq t_k \lesssim \frac{1}{-\ln \varepsilon}$, then $t_k - t_{k+1} \lesssim  \varepsilon$. It implies that $\gamma(t)$ stays inside $B^{0,{X_1}}_{\varepsilon}$ for all the $t\leq \frac{C}{-\ln \varepsilon}$ for some constant $C < +\infty$. We obtain
\[ \Lambda(B^{0,{X_1}}_{\varepsilon}) \gtrsim \left | \left \{ t \in [0,\pi] \mid \exists k \geq \frac{1}{\varepsilon} \ \  |t_k-t|< \varepsilon \right  \} \right | \geq  \left | \left [0, \frac{C}{-\ln \varepsilon} \right ] \right | \gtrsim  \frac{1}{-\ln \varepsilon}.  \]
It follows that there cannot be any upper bound of the form $\varepsilon ^{\eta}$ for $\Lambda(B^{0,{X_1}}_{\varepsilon})$, and \eqref{OLD} cannot be satisfied. Geometrically, this shows that the concentration of points in the layers of the unit ball is too important for $X_1$ to satisfy the layer decay property.\\

\bigskip

\noindent $(2)$ Let $1<\nu<+\infty$. Let $f \in L^{\nu}(B^{0,{X_1}})$, $f$ supported on $B^{0,{X_1}}$, $g\in L^{\nu'}(2B^{0,{X_1}}\backslash B^{0,{X_1}})$, $g$ supported on $2B^{0,{X_1}} \backslash B^{0,{X_1}}$. Denote by $I_{2k}, k \geq0$ the connected sets of $t$ for which $\gamma(t) \in B^{0,{X_1}}$, $I_{2k} = ]t_{2k},t_{2k+1}[$, and $I_{2p+1}, p\geq0$ the connected sets of $t$ for which $\gamma(t) \in 2B^{0,{X_1}} \backslash B^{0,{X_1}}$, $I_{2p+1} = [t_{2p+1}, t_{2p+2}]$. By the Ahlfors-David property of ${X_1}$, we have
\[ |H(f,g)| = \left | \int_{B^{0,{X_1}}} \int_{2B^{0,{X_1}} \backslash B^{0,{X_1}} } \frac{f({x})g(y)}{\lambda({x},y)} \dd \Lambda(y) \dd \Lambda({x}) \right | \eqsim \left | \sum_{k,p \geq0} \int_{t \in I_{2k}} \int_{s \in I_{2p+1} } \frac{\widetilde{f}(t)\widetilde{g}(s)}{|t-s|} \dd s \dd t \right |, \]
where $\widetilde{f} = f \circ \gamma $, $\widetilde{g} = g \circ \gamma$, $\widetilde{f} \in L^{\nu}$, supported on $\cup_{k\geq0} {I_{2k}}$, $\widetilde{g} \in L^{\nu'}$, supported on $\cup_{p\geq0}{I_{2p+1}}$, and $\|\widetilde{f}\|_{\nu} \eqsim \|f\|_{\nu}$, $\|\widetilde{g}\|_{\nu'} \eqsim \|g\|_{\nu'}$, because $|\gamma'|\eqsim 1 $. Now, assume that $\widetilde{f},\widetilde{g}$ are constant and positive on each $I_{2k}, I_{2p+1}$. Set
\[ f_k = \left ( \int_{I_{2k}}{|\widetilde{f}(t)|^{\nu} \dd t } \right )^{1/{\nu}} \in {\ell}^{\nu}(\N), \quad \mathrm{with} \quad \|(f_k)_k\|_{\ell^{\nu}} = \|\widetilde{f}\|_{\nu},  \]
\[ g_p = \left ( \int_{I_{2p+1}}{|\widetilde{g}(s)|^{\nu'} \dd s } \right )^{1/{\nu'}} \in \ell^{\nu'}(\N),  \quad \mathrm{with} \quad \|(g_p)_p\|_{\ell^{\nu'}} = \|\widetilde{g}\|_{\nu'}. \]
Remark that if $p,k \geq 1$ with $p\notin \{ k,k-1\}$, and $t \in I_{2k}$, $s \in I_{2p+1}$, then
\[ |s-t| \lesssim  \left |\frac{1}{\ln p} - \frac{1}{\ln k} \right | \lesssim  \frac{|\ln (p/k)|}{\ln p \ln k}.    \]
We thus have
\begin{align*}
|H(f,g)|&  \gtrsim \sum_{k,p \geq1 \atop p \notin \{k,k-1\}} \frac{\ln p \ln k}{|\ln(p/k)|} \int_{t \in I_{2k}} |\widetilde{f}(t)| \dd t \int_{s \in I_{2p+1} } |\widetilde{g}(s)| \dd s \\
& =  \sum_{k,p \geq1 \atop p \notin \{k,k-1\}} f_k g_p \frac{\ln p \ln k}{|\ln(p/k)|}  |I_{2k}|^{\frac{1}{\nu'}} |I_{2p+1}|^{\frac{1}{\nu}}.
\end{align*}
But since
\[ |I_l| = |t_{l+1} - t_l | \underset {l \rightarrow +\infty} \sim \frac{1}{l (\ln l)^2},  \]
there exists $N \in \N^{\ast}$ such that if $l \geq N$, then $|I_{l}| \geq  \frac{1}{2 l (\ln l)^2}$. It follows that
\[ |H(f,g)| \gtrsim  \sum_{k,p \geq N \atop p \notin \{k,k-1\}} f_k g_p \frac{(\ln p)^{1- \frac{2}{\nu}} (\ln k)^{1-\frac{2}{\nu'}}}{k^{\frac{1}{\nu'}} p^{\frac{1}{\nu}}|\ln(p/k)|}. \]
It is easy to see that this is an unbounded operator. Fix $0<\eta<1/2$, and set for example for $k,p \geq N$
\[ f_k = \frac{1}{k^{\frac{1}{\nu}} (\ln k)^{\frac{1}{\nu}+ \eta}}, \quad g_p = \frac{1}{p^{\frac{1}{\nu'}} (\ln p)^{\frac{1}{\nu'}+ \eta}}. \]
Then
\begin{align*}
\sum_{p \geq k+1} g_p \frac{1}{p^{\frac{1}{\nu}}} \frac{(\ln p)^{1-\frac{2}{\nu}}}{|\ln(p/k)|} & = \sum_{p \geq k+1} \frac{1}{p(\ln p)^{\frac{1}{\nu} +\eta}\ln (p/k)} \\
& \geq \int_{k+1}^{+\infty} \frac{\dd t}{t (\ln t)^{\frac{1}{\nu} + \eta} \ln(t/k)} = \int_{1+\frac{1}{k}}^{+\infty}\frac{\dd u}{u \ln u \ln(ku)^{\frac{1}{\nu}+\eta}} \\
& \geq \frac{1}{\ln(3k)^{\frac{1}{\nu}+ \eta}} \int_{1+ \frac{1}{k}}^3 \frac{\dd u}{u \ln u}  \\
& \gtrsim \frac{|\ln (\ln(1+1/k))|}{(\ln k)^{\frac{1}{\nu} +\eta}} \underset{k \rightarrow +\infty} \sim (\ln k)^{\frac{1}{\nu'} - \eta}.
\end{align*}
It follows that
\begin{align*}
|H(f,g)| \gtrsim \sum_{k \geq N} f_k \frac{(\ln k)^{1- \frac{2}{\nu'}}}{k^{\frac{1}{\nu'}}}  (\ln k)^{\frac{1}{\nu'} - \eta} = \sum_{k\geq N} \frac{1}{k(\ln k)^{2 \eta}} = +\infty.
\end{align*}
Thus \eqref{3etoile} cannot be satisfied for any $1<\nu<+\infty$. Once again, geometrically there is too much mass that concentrates in the inner and outer layers of the unit ball of $X_1$ for the Hardy property to be satisfied.
\end{proof}

\bigskip

\begin{remark}
Observe that although the layer decay property is not satisfied here, we still have $\Lambda(B^{0,X_1}_{\varepsilon}) \underset{\varepsilon \rightarrow 0} \rightarrow 0$. Indeed, $\bigcap_{n \geq 1} B^{0,X_1}_{1/n} = \overline{B^{0,X_1}} \backslash B^{0,X_1} $, but this set is of measure $0$ as it is countable. Thus, we have
\[ \Lambda(\overline{B^{0,X_1}}\backslash B^{0,X_1} ) = 0 = \lim_{n \rightarrow +\infty} \Lambda(B^{0,X_1}_{1/n}).  \]
Observe furthermore that it is always the case in this kind of example with a continuous function $\epsilon$, because $\overline{B^{0,X_{\epsilon}}} \backslash B^{0,X_{\epsilon}}$ must necessarily be a countable set, hence of measure zero. Indeed, $\overline{B^{0,X_{\epsilon}}} \backslash B^{0,X_{\epsilon}}$ is the set of points $\gamma(t_0)$ for which $\epsilon(t_0)=0$ and for every $\eta>0$, there exists $0<\varepsilon<\eta$ such that $\epsilon(t_0 \pm \varepsilon) <0$. By continuity of $\epsilon$, we deduce from this that for every point $\gamma(t_0) \in \overline{B^{0,X_{\epsilon}}} \backslash B^{0,X_{\epsilon}}$, there exists $q_0 \in \Q$ with $\epsilon(q_0)<0$ and $|t_0-q_0|$ as small as one wants. We can thus construct an injection from $\overline{B^{0,X_{\epsilon}}} \backslash B^{0,X_{\epsilon}}$ to $\Q$ and it follows that $\overline{B^{0,X_{\epsilon}}} \backslash B^{0,X_{\epsilon}}$ is countable, hence of measure zero.
\end{remark}

\bigskip

\subsection{Polynomial oscillation}

This time, let $b(t) = \frac{\pi^2}{t}$ : choose
\[ \epsilon_2(t) =  A_0 \left (\frac{t}{\pi} \right )^3 \sin \left ( \frac{\pi^2}{t} \right ), \]
with $A_0$ a sufficiently small constant to be specified later, and construct a space $X_2$ as before. Again, $({X_2},\dd , \Lambda)$ is an Ahlfors-David space (and thus a space of homogeneous type).

\bigskip

\begin{prop}\begin{enumerate}
\item ${X_2}$ does not satisfy the Hardy property $(\mathrm{HP})$.
\item $X_2$ does satisfy the layer decay inequality \eqref{OLD}, but not the relative layer decay inequality \eqref{LUOLD}.
\end{enumerate}
\end{prop}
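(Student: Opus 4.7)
The plan is to exploit the bump structure of $X_2$ near the accumulation point $\gamma(0)=(1,0)$. The zeros $t_k=\pi/k$ of $\sin(\pi^2/t)$ partition the bumpy piece into intervals $I_k=(t_{k+1},t_k)$ with widths $|I_k|=\pi/(k(k+1))\sim 1/k^2$ and amplitudes $\sup_{I_k}|\epsilon_2|\sim 1/k^3$, and the bumps $\gamma(I_k)$ alternate between the inside and the outside of the unit disk according to the parity of $k$. For $A_0$ sufficiently small, $|\gamma'|\sim 1$ and the verification of part $(1)$ of the previous proposition adapts to show that $(X_2,\mathrm{d},\Lambda)$ is Ahlfors--David regular of dimension $1$, whence arc length, parameter length and the measure $\Lambda$ are mutually comparable on the bumpy piece, and in particular $|\gamma(t)-\gamma(s)|\eqsim |t-s|$ uniformly on $[0,\pi]$.

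To prove \eqref{OLD}, I would focus on the critical ball $B^{0,X_2}$ whose boundary in $X_2$ consists precisely of the crossings $\gamma(t_k)$ together with the endpoints. A bump $\gamma(I_k)$ lies entirely in the layer $B^{0,X_2}_{\varepsilon}$ as soon as $\varepsilon\gtrsim|I_k|\sim 1/k^2$, i.e., $k\gtrsim \varepsilon^{-1/2}$; for $k\lesssim \varepsilon^{-1/2}$, only two subintervals of length $\sim\varepsilon$ near the endpoints of $I_k$ are captured. Summing,
\[
\Lambda(B^{0,X_2}_{\varepsilon}) \lesssim \sum_{k\geq \varepsilon^{-1/2}}\frac{1}{k^2} + \varepsilon\cdot \varepsilon^{-1/2}\sim \varepsilon^{1/2},
\]
and since $\Lambda(B^{0,X_2})\sim 1$ this yields \eqref{OLD} for this ball with $\eta=1/2$. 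For any other ball $B(z,r)\subset X_2$, the Euclidean sphere $\partial B(z,r)$ meets the bumpy piece only at a controlled, finite number of transverse points (rather than along the full sequence $\gamma(t_k)$), and the analogous counting gives a cleaner estimate with a uniform exponent.

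To disprove \eqref{LUOLD}, I take $w=\gamma(0)=(1,0)$, $R$ small, and set $\varepsilon=R^2$. The ball $B(w,R)$ captures exactly the bumps $\gamma(I_k)$ with $t_k\lesssim R$, that is $k\gtrsim 1/R$, and $\Lambda(B(w,R))\sim R$ by Ahlfors--David regularity. Every such bump has width $1/k^2\lesssim R^2=\varepsilon$ and therefore sits entirely in $B^{0,X_2}_{\varepsilon}$, so
\[
\Lambda\bigl(B^{0,X_2}_{\varepsilon}\cap B(w,R)\bigr)\gtrsim \sum_{k\gtrsim 1/R}\frac{1}{k^2}\sim R\sim \Lambda(B(w,R)),
\]
while $(\varepsilon/R)^{\eta}=R^{\eta}\to 0$ for any $\eta>0$, contradicting \eqref{LUOLD}. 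To disprove $(\mathrm{HP})$, I test \eqref{3etoile} at the same ball for $\nu=\nu'=2$, taking $f=a_k$ constant on each inside bump $I_k$ ($k$ odd) and $g=b_j$ constant on each outside bump $I_j$ ($j$ even). Using $|\gamma(t)-\gamma(s)|\eqsim |t-s|\eqsim |k-j|/(kj)$ for $t\in I_k$, $s\in I_j$ with $|k-j|\geq 2$, the bilinear form reduces to
\[
H(f,g)\gtrsim \sum_{\substack{k\text{ odd},\, j\text{ even} \\ |k-j|\geq 2}}\frac{a_k\,b_j}{k\,j\,|k-j|}.
\]
Testing with $a_k=k\,\mathbf{1}_{\{k\leq N\}}$ and $b_j=j\,\mathbf{1}_{\{j\leq N\}}$ gives $\|f\|_2\eqsim\|g\|_2\eqsim\sqrt{N}$ (since $|I_k|\sim 1/k^2$), but $H(f,g)\gtrsim \sum_{3\leq d\leq N,\,d\text{ odd}} N/d\sim N\log N$; the constant in \eqref{3etoile} would then have to grow like $\log N$, which is impossible.

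The main technical obstacle is establishing the comparability $|\gamma(t)-\gamma(s)|\eqsim |t-s|$ across different bumps, as without it the reduction of $H(f,g)$ to a discrete Hilbert-type sum cannot be trusted. A direct expansion yields $|\gamma(t)-\gamma(s)|^2\eqsim (\epsilon_2(t)-\epsilon_2(s))^2+(t-s)^2$ for close parameter values, and the perturbation term $|\epsilon_2(t)-\epsilon_2(s)|\lesssim 1/\min(k,j)^3$ is dominated by $|t-s|\gtrsim 1/(kj)$ whenever $|k-j|\geq 2$, so the comparability is preserved. A secondary, more bookkeeping-heavy point is confirming that \eqref{OLD} holds uniformly across all balls, which amounts to a case analysis on how the Euclidean sphere $\partial B(z,r)$ meets the oscillating curve.
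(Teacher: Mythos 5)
Your treatment of $(\mathrm{HP})$ and of \eqref{OLD} for the unit ball is essentially the paper's argument: the reduction to the discrete kernel $\frac{1}{kj|k-j|}$ against weights $|I_k|\sim k^{-2}$ is exactly the paper's $\sum f_kg_p/|p-k|$ Hilbert-type matrix (the paper invokes its known unboundedness on $\ell^2$ where you exhibit the truncated test sequence giving the $\log N$ growth — both are fine), and the $\varepsilon^{1/2}$ bound on $\Lambda(B^{0,X_2}_\varepsilon)$ is verbatim the paper's computation. Your disproof of \eqref{LUOLD} is genuinely different and worth noting: the paper deduces it indirectly from $\neg(\mathrm{HP})$ via Proposition \ref{LUOLDP}, whereas you localize at $w=(1,0)$, $\varepsilon=R^2$, and show the layer fills a constant fraction of $B(w,R)$; this is self-contained, and it makes geometrically explicit where the mass concentrates, which the paper only states as informal commentary.

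The genuine gap is in your claim that for every ball other than $B^0$ "the Euclidean sphere $\partial B(z,r)$ meets the bumpy piece only at a controlled, finite number of transverse points." This is precisely the statement that needs proof, and it is not a transversality bookkeeping issue: for a ball $B(1-r,r)$ with $0<r<1/2$, internally tangent to the unit circle at the accumulation point $(1,0)$, the boundary circle $\mathcal{C}$ hugs the unit circle to second order at the tangency point, while the curve oscillates across the unit circle with zeros accumulating there. A priori $\mathcal{C}$ could be crossed infinitely often, which would destroy \eqref{OLD} for these balls exactly as it is destroyed for $B^0$ locally. The paper rules this out by computing the radial separation $u(t)\geq \tfrac{3}{16}t^2$ between $\mathcal{C}$ and the unit circle and comparing it with the cubic amplitude $|\epsilon_2(t)|\leq A_0(t/\pi)^3$: for $A_0$ small the perturbed arc never enters the tangent ball, so $B^{X_2}$ is a single segment and $\card(\overline{B^{X_2}}\backslash B^{X_2})\leq 2$. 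This quadratic-versus-cubic comparison is the reason the amplitude $a(t)=A_0(t/\pi)^3$ was chosen and the only place the smallness of $A_0$ is used beyond regularity of $\gamma$; your proposal never engages with it, so the uniform \eqref{OLD} claim is unsupported as written. (The remaining cases — radius $\geq 1/2$, and small non-tangential balls, which have at most two components — are indeed routine, as you suggest.)
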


\begin{proof}
\noindent $(1)$ We prove again that \eqref{3etoile} is not satisfied for the unit ball $B^{0,X_2}$. Let us use the same notations as before. For functions $f \in L^2(B^{0,{X_2}})$, $f$ supported inside $B^{0,{X_2}}$, and $g \in L^2(2B^{0,{X_2}} \backslash B^{0,{X_2}})$, $g$ supported inside $2B^{0,{X_2}} \backslash B^{0,{X_2}}$, we have
\[ |H(f,g)| \eqsim \left | \sum_{k,p \geq0} \int_{t \in I_{2k}} \int_{s \in I_{2p+1} } \frac{\widetilde{f}(t)\widetilde{g}(s)}{|t-s|} \dd s \dd t \right |. \]
This time, we have for $k \geq 1$, $t_k = \frac{\pi}{k} $, and thus for $l \geq 1$, we have $|I_l | = \frac{\pi}{l(l+1)} \gtrsim \frac{1}{l^2}$. Besides, if $p,k \geq 1$ with $p\notin \{k,k-1\}$, and $t \in I_{2k}$, $s \in I_{2p+1}$, then
\[ |s-t|^{-1} \gtrsim  \left |\frac{1}{p} - \frac{1}{k} \right |^{-1} \gtrsim  \frac{pk}{|p-k|}.    \]
Assume again that $\widetilde{f},\widetilde{g}$ are constant and positive on each $I_{2k}, I_{2p+1}$, and set
\[ f_k = \left ( \int_{I_{2k}}{|\widetilde{f}(t)|^{2} \dd t } \right )^{1/{2}} \in {\ell}^{2}(\N), \quad \mathrm{with} \quad \|(f_k)_k\|_{\ell^{2}} = \|\widetilde{f}\|_{2},  \]
\[ g_p = \left ( \int_{I_{2p+1}}{|\widetilde{g}(s)|^{2} \dd s } \right )^{1/{2}} \in \ell^{2}(\N),  \quad \mathrm{with} \quad \|(g_p)_p\|_{\ell^{2}} = \|\widetilde{g}\|_{2}. \]
Then, we have
\[ |H(f,g)| \gtrsim \sum_{k,p \geq 1 \atop p \notin \{k,k-1\}} \frac{f_k g_p}{|p-k|}.  \]
But it is well known that this operator is unbounded on $\ell^2$. It follows that \eqref{3etoile} cannot be satisfied for $\nu =2$. Thus, $X_2$ does not satify $(\mathrm{HP})$, nor \eqref{LUOLD} because of Proposition \ref{LUOLDP}. 

\bigskip

\noindent $(2)$ Let $\varepsilon >0$. We are going to prove \eqref{OLD} for all the balls $B^{X_2}(z,r)$ centered at a point $z \in X_2$ of radius $r>0$. We classify these balls in three categories, each of which will be taken care of differently: first there are the balls $B(z,r)$ of radius $r \geq 1/2$, then there are the balls $B(z,r)$ of radius $0<r<1/2$ tangential to the ball $B^0$ at the point of affix $1$, and finally the balls $B(z,r)$ of radius $0<r<1/2$ non tangential to the ball $B^0$ at the point of affix $1$. We begin by taking care of the first category. We first show that ${X_2}$ satisfies \eqref{OLD} for the unit ball $B^{0,X_2}$, for some exponent $\eta < 1$. Indeed, remark that if $k \geq  \varepsilon^{-1/2}$, then $|I_k| = |t_k - t_{k+1}| \lesssim \frac{1}{k^2} \lesssim \varepsilon$, and it implies that $\gamma(t)$ stays inside $B^{0,{X_2}}_{\varepsilon}$ for all the $t \leq C \varepsilon^{1/2}$ for some uniform constant $0<C<+\infty$. Besides, observe that we have
\[ \mathrm{Card} \left \{ k \in \N \mid k \lesssim \varepsilon^{-1/2} \right \} \lesssim \varepsilon^{-1/2}, \]
and that for each one of the corresponding $t_k$, there is a contribution of at most $\varepsilon$ to $\Lambda(B^{0,{X_2}}_{\varepsilon})$. Thus, we have
\begin{align*}
\Lambda(B^{0,{X_2}}_{\varepsilon}) & \lesssim |[0, C \varepsilon^{1/2} ]| + \varepsilon \times  \mathrm{Card} \left \{ k \in \N \mid k \lesssim \varepsilon^{-1/2} \right \} \lesssim \varepsilon^{1/2} + \varepsilon \times \varepsilon^{-1/2} \lesssim \varepsilon^{1/2}.
\end{align*}
Since $X_2$ is Ahlfors-David, we have $ \Lambda(B^{0,{X_2}}) \eqsim 1$, and \eqref{OLD} follows.

Now, observe that this extends to all the balls $B=B(z,r)$ of radius $r \geq 1/2$. As a matter of fact, remark that we necessarily have $ \Lambda(B^{X_2}_{\varepsilon}) \leq \Lambda(B^{0,X_2}_{\varepsilon}) \lesssim \varepsilon^{1/2}$. Indeed, there are at most two elements in $\overline{B^{X_2}} \backslash B^{X_2}$ outside of $\{ \gamma(t) \mid 0 \leq t \leq \pi \}$. And it is also easy to see that $\overline{B^{X_2}} \backslash B^{X_2} \cap \{ \gamma(t) \mid 0 \leq t \leq \pi \}$ can be injected inside $\overline{B^{0,X_2}} \backslash B^{0,X_2}$. Thus the preceding argument still applies. It follows that
\[ \Lambda(B^{X_2}_{\varepsilon}) \lesssim \left ( \frac{\varepsilon}{r} \right)^{1/2} r^{1/2} \lesssim \left ( \frac{\varepsilon}{r} \right)^{1/2} r , \]
because $r\geq 1/2$. But since $X_2$ is Ahlfors-David, we have $ \Lambda(B^{X_2}) \eqsim r$ and \eqref{OLD} follows.

Now, we consider the balls $B=B(z,r)$ with $z=1-r$ and $0<r<1/2$, tangential to $B^0$ at the point of affix $1$. Let $\mathcal{C}$ denote in $\R^2$ the circle of center $z$ and radius $r$. Switching to polar coordinates, for $t>0$ sufficiently small ($t \leq t_{\mathrm{max}} = \arctan{\frac{r}{1-r}}$), denote by $M(t) =(t,v(t)) $ the point of the circle $\mathcal{C}$ farthest from the origin, and let $u(t) = 1- v(t)$ (see Figure \ref{FigproofX2}). Then $v(t)$ satisfies the following equation
\[ v(t)^2  -2 v(t) (1-r) \cos t + (1-r)^2 = r^2,\]
so that we have
\[ u(t)^2 +2u(t) [(1-r)\cos t -1] +2(1-r)[1-\cos t] = 0, \]
hence
\[ u(t) = 1 - (1-r) \cos t - [(1-r)^2 \cos^2 t +1 -2(1-r)]^{1/2}. \]

\bigskip

\begin{figure}[htbp]
	\centering
		\includegraphics[scale=0.65]{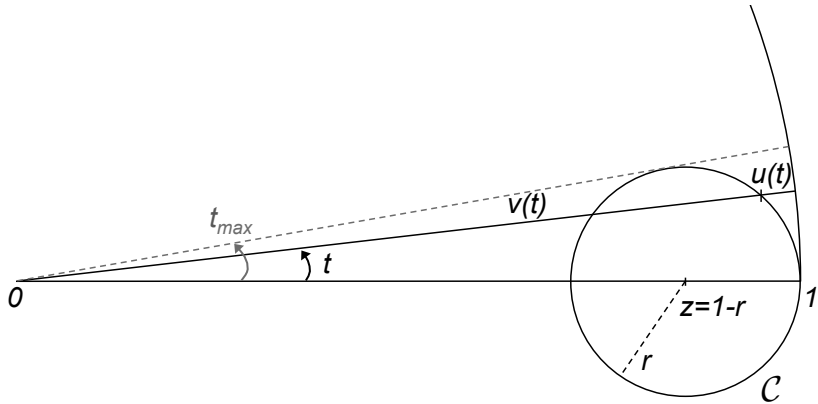}
	\caption{Existence of a separation between $\mathcal{C}$ and the unit circle.}
	\label{FigproofX2}
\end{figure}

\bigskip
\bigskip

\noindent There exists $t_0 >0$ such that for every $0<t<t_0$, $\cos t \leq 1 - \frac{t^2}{4}$ and $t^4 \leq t^2$. Then, for every $0< t <t_0$ and $0<r<1/2$, we have
\begin{align*}
u(t) & \geq 1 -(1-r) (1- \frac{t^2}{4}) - \left [(1-r)^2 (1- \frac{t^2}{4})^2 +2r -1 \right ]^{1/2} \\
& \geq r + \frac{1-r}{4} t^2 - \left [(1-r)^2 (1- \frac{t^2}{2} + \frac{t^4}{16}) +2r -1 \right ]^{1/2}\\
& \geq r + \frac{1-r}{4} t^2 - r \left  [1- \frac{(1-r)^2}{2r^2} t^2 + \frac{(1-r)^2}{16r^2} t^4 \right ]^{1/2} \\
& \geq  r + \frac{1-r}{4} t^2 - r \left  [1- \frac{7(1-r)^2}{16r^2} t^2 \right  ]^{1/2} \geq r + \frac{1-r}{4} t^2 -r \left [ 1 - \frac{1}{2} \frac{7(1-r)^2}{16r^2} t^2  \right ] \\
& \geq \frac{7 - 6r -r^2}{32r} t^2  \geq \frac{3}{16}t^2.
\end{align*}
Furthermore, since $u(t)$ clearly is an increasing function, if $t_0 \leq t \leq t_{\mathrm{max}}$, we have $u(t) \geq \frac{3}{16} t_0^2$. Now, as $|\epsilon(t)| \leq A_0 (t/{\pi})^3$, it is clear that if $A_0 < \frac{3}{16}t_0^2$, then $\gamma(t)$ stays outside of $B$ for every $0<t \leq \pi$. Thus, $B^{X_2}$ reduces to the open segment $]1-2r, 1[$ which is connected. It implies that $\mathrm{Card} ( \overline{B^{X_2}} \backslash B^{X_2} )\leq 2$, and consequently, we have
\[ \Lambda (B^{X_2}_{\varepsilon}) \lesssim 2 \varepsilon \lesssim \left ( \frac{\varepsilon}{r}\right ) \Lambda (B^{X_2}), \]
as, once again, by the Ahlfors-David property of $X_2$ we have $\Lambda (B^{X_2}) \simeq r$.

It remains only to consider the balls $B=B(z,r)$ of radius $0<r<1/2$ non tangential to the ball $B^0$ at the point of affix $1$. But it is immediate to see that the number of connected components of such a ball $B^{X_2}$ is at most $2$: $B^{X_2}$ is a connected set in most cases, but there can be two connected components if $z= \gamma(\tau)$ with $0<\tau \leq \pi$ is small and $r$ is small enough (then $1 \notin B^{X_2}$ but $\gamma(t) \in B^{X_2}$ for some $-1<t<0$). Thus, as for the balls of the previous category, we have $\mathrm{Card} ( \overline{B^{X_2}} \backslash B^{X_2}) \leq 4$, and
\[ \Lambda (B^{X_2}_{\varepsilon}) \lesssim 4 \varepsilon \lesssim \left ( \frac{\varepsilon}{r}\right ) \Lambda (B^{X_2}). \]
Putting all this together, we have proven that ${X_2}$ satisfies \eqref{OLD} for $\eta =1/2$.
\end{proof}

\bigskip
\bigskip

\noindent Let us make a few geometric comments about this result. Observe that $X_2$ satisfying the layer decay property \eqref{OLD} means that the measures of the outer and inner layers of the balls in $X_2$, and more particularly the unit ball, do not get too big. The amplitude of the polynomial oscillation around the unit ball does not decrease too rapidly, so that the mass does not concentrate too much in those layers. On the other hand, this is not the case locally, and particularly in the neighborhood of the point of affix $1$, where the mass does concentrate heavily in the layers. This is why the relative layer decay property \eqref{LUOLD} fails.\\

\bigskip

\noindent The space $X_2$ is thus a counterexample to the implications $\eqref{OLD} \Rightarrow \eqref{LUOLD}$ and $\eqref{OLD} \Rightarrow (\mathrm{HP})$. The proof of Theorem \ref{diagram} is now complete.

\bigskip

\begin{remarks}\begin{itemize}
\item Choosing $b(t) = \frac{\pi^{1+\alpha}}{t^{\alpha}} $ with $\alpha>0$, and $a(t)$ accordingly, would give a space very similar to $X_2$, satisfying the same properties. Actually, the choice of $a$ does not really matter as long as it ensures that $\gamma$ keeps finite arclength.
\item One could pick similar examples for other types of decreasing functions $b$. This range of examples shows that the Hardy property, as well as the relative and non relative layer decay properties, are very unstable, as it suffices to apply a slight perturbation to the initial space, where they were satisfied, to lose them.
\end{itemize}
\end{remarks}

\bigskip
\bigskip
\bigskip
\bigskip
\bigskip

\bibliographystyle{short}	
\bibliography{myrefs}

\end{document}